\documentclass[reqno,11pt,tbtags,intlimits]{amsart}
\usepackage{amsmath,amssymb,mathrsfs,amsthm,amsfonts}
\usepackage[inline]{enumitem}
\usepackage[usenames,dvipsnames]{xcolor}
\usepackage{thmtools}
\usepackage[percent]{overpic}
\usepackage{comment}
\usepackage[foot]{amsaddr}
\usepackage{mathtools,bbm}
\usepackage{esint}
\usepackage[paper=letterpaper,margin=1in,marginparwidth=1.5cm]{geometry}
\usepackage{tikz}
\counterwithin{figure}{section}

\usepackage[hypertexnames=false]{hyperref}
\usepackage{cleveref}

\hypersetup{%backref=true,       
    urlcolor= violet,                
    linkcolor=blue,                
    bookmarks=true,                 
    bookmarksopen=false,
    citecolor=olive,
}

\newtheorem{thm}{Theorem}[section]

\newtheorem{lem}[thm]{Lemma}
\newtheorem{cor}[thm]{Corollary}
\crefname{thm}{Theorem}{Theorems}
\crefname{prop}{Proposition}{Propositions}
\crefname{lem}{Lemma}{Lemmas}
\crefname{cor}{Corollary}{Corollaries}

\crefname{claim}{Claim}{Claims}
\crefname{step}{Step}{Steps}

\theoremstyle{definition}

\newtheorem{example}[thm]{Example}
\newtheorem{asss}[thm]{Assumptions}

\crefname{asss}{Assumptions}{Assumptions}
\crefname{ass}{Assumption}{Assumptions}
\crefname{definition}{Definition}{Definitions}
\crefname{example}{Example}{Examples}

\theoremstyle{remark}
\newtheorem{remark}[thm]{Remark}
\crefname{remark}{Remark}{Remarks}

\AddToHook{env/prop/begin}{\crefalias{thm}{prop}}
\AddToHook{env/lem/begin}{\crefalias{thm}{lem}}
\AddToHook{env/cor/begin}{\crefalias{thm}{cor}}
\AddToHook{env/claim/begin}{\crefalias{thm}{claim}}
\AddToHook{env/step/begin}{\crefalias{thm}{step}}
\AddToHook{env/definition/begin}{\crefalias{thm}{definition}}
\AddToHook{env/example/begin}{\crefalias{thm}{example}}
\AddToHook{env/remark/begin}{\crefalias{thm}{remark}}
\AddToHook{env/asss/begin}{\crefalias{thm}{asss}}
\AddToHook{env/ass/begin}{\crefalias{thm}{ass}}

\numberwithin{equation}{section}

\makeatletter 
    \def\@makefnmark{\@textsuperscript{\normalfont\texttt{\@thefnmark}}} 
\makeatother

\newcommand{\norm}[1]{\ensuremath\lVert#1\rVert}

\newcommand{\defeq}{\vcentcolon=}

\renewcommand{\leq}{\leqslant}
\renewcommand{\geq}{\geqslant}

\let\temp\phi
\let\phi\varphi
\let\varphi\temp
\let\temp\varepsilon
\let\varepsilon\epsilon
\let\epsilon\temp

\DeclareMathOperator{\tr}{tr}

\newcommand{\N}{\mathbb{N}}
\newcommand{\Pc}{\mathcal{P}}
\newcommand{\Z}{\mathbb{Z}}
\newcommand{\de}{\partial}
\newcommand{\di}{\mathrm{d}}
\newcommand{\R}{{\mathbb{R}}}

\newcommand{\call}[1]{\ensuremath\mathcal{#1}}
\newcommand{\frk}[1]{\ensuremath\mathfrak{#1}}

\newcommand{\bb}[1]{\ensuremath\mathbb{#1}}

\newcommand{\var}{\mspace{1.8mu}\cdot\mspace{1.8mu}}

\newcommand{\trn}{\dagger}
\newcommand{\bX}{\boldsymbol{X}}

\newcommand{\bs}{\boldsymbol}

\newcommand{\DSet}[1]{\ensuremath[\![#1]\!]}
\newcommand\1{\mathbbm{1}}

\newcommand\mres\llcorner

\begin{document}

\date{\today}

\title[On the unimportance of distant players in sparse network games]{On the unimportance of distant players \\
in sparse stochastic differential network games}
\author[M.\ Cirant]{Marco Cirant\textsuperscript{(1)}}
\address{\textsuperscript{(1)}Dipartimento di Matematica ``T.\ Levi-Civita'' \\ Università degli Studi di Padova \\ Via Trieste 63 \\ 35121 Padova, Italy.}
\author[D.\ F.\ Redaelli]{Davide Francesco Redaelli\textsuperscript{(2)}}
\address{\textsuperscript{(2)}Dipartimento di Matematica \\ Università di Roma Tor Vergata \\ Via della Ricerca Scientifica 1\\ 00133 Roma, Italy.}
\email{cirant@math.unipd.it, redaelli@mat.uniroma2.it}

\keywords{}

\subjclass[2010]{}

\begin{abstract} 
We study stochastic differential games with $N$ players, where interactions are determined by sequences of graphs in which the number of neighbours of each node remains bounded as $N$ grows, such as chain graphs or lattices. Our main goal is to quantify the phenomenon of the ``unimportance of distant players'' in such a large population, sparse regime: we show that, in order to determine the optimal trajectory in open-loop strategies of a given player with an arbitrarily small error, it suffices to consider a reduced game involving only the players at a certain distance in the graph, assigning arbitrary trajectories to the farther ones. Our main result provides an explicit non-asymptotic estimate in terms of the graph distance, valid independently of the time horizon $T$, under suitable convexity and monotonicity assumptions on the costs. Similar results are obtained for games in distributed strategies.
\end{abstract}

\maketitle

\setcounter{tocdepth}{2}
%\tableofcontents

\section{Introduction}

We consider an $N$-player dynamics with states $X^{*,i}_t \in \R^d$ governed by the SDEs
\begin{equation} \label{eq_stateop}
\begin{dcases}
\di X^{*,i}_t =  \beta^{*,i}_t \,\di t + \sigma^i\,\di B_t^i & \text{in}\ [0,T] \\
X_{0}^{*,i} \sim m_0^i \in \Pc_2(\R^d)
\end{dcases}
\qquad i \in \DSet N \defeq \{0,\dots,N-1\},
\end{equation}
where $\beta^{*,i}_t$ denotes a \emph{drift} that we are going to specify---see~\eqref{eq_opconOL}, $\sigma^i \in \R^{d \times d_n}$ is the \emph{diffusion matrix} (non-degenerate in the sense that $\Sigma^i \defeq \sigma^i \sigma^i{}^\trn$ is positive definite) and the $B^i$'s are independent $d_n$-dimensional Brownian motions, while $\call P(\R^d)$ denotes the space of probability measures on $\R^d$ with bounded second moment. We consider each drift to be given by
\begin{equation} \label{eq_opconOL}
\beta^{*,i}_t = - D_p H^i(X^{*,i}_t,v^i(t,\bX^*_t))
\end{equation}
for some $C^2$-smooth \emph{Hamiltonian} $H^i = H^i(x,p)$, where $v = (v^i)_{i \in \DSet N} \colon [0,T] \times (\R^d)^N \to (\R^d)^N$ solves the following \emph{Pontryagin PDE system}:
\begin{equation} \label{pspde}
\begin{dcases}
-\de_t v^i - \tfrac12 \sum_{j \in \DSet N} \tr(\Sigma^j D_{jj} v^i) + D_xH^i(x^i,v^i) + \sum_{j \in \DSet N} D_j v^i D_pH^j(x^j,v^j) = D_i f^i \\[-3pt]
v^i(T,\var) = D_ig^i
\end{dcases}
\quad i \in \DSet N,
\end{equation}
where $f = (f^i)_{i\in\DSet N} \colon [0,T] \times (\R^d)^N \to \R^N$ and $g = (g^i)_{i\in\DSet N} \colon (\R^d)^N \to \R^N$ are functions of class $C^2$ with further properties that we will disclose in a moment.

Under standard assumptions, $\bs X^*$ is the vector of trajectories at  \emph{Nash equilibrium in open-loop strategies} $\alpha^i$ for an $N$-player game with dynamics
\begin{equation} \label{eq_state}
\begin{dcases}
\di X^i_t = b^i(X^i_t,\alpha_t^i) \,\di t + \sigma^i\,\di B_t^i & \text{in}\ [0,T] \\
X_0^i \sim m_0^i \in \Pc_2(\R^d)
\end{dcases}
\end{equation}
and \emph{costs}
\begin{equation} \label{eq_gcosts}
J^i(\alpha) = \bb E \biggr[\int_0^T \Bigl( L^i(X^i_t,\alpha^i_t) + f^i(t,\bX_t) \Bigr)\,\di t + g^i(\bX_T) \biggr],
\end{equation}
once the Hamiltonians are defined by
\[
H^i(x,p) = \sup_{a \in \R^d} \Bigl( - b^i(x,a) \cdot p - L^i(x,a) \Bigr), \qquad x,p \in \R^d.
\]
In this case, $v$ is the so-called \emph{decoupling field} for the associated \emph{Pontryagin FBSDE system}. We refer the reader to \cite{cardelar} for more precise information.

%Under the choice \eqref{eq_opconOL}, system~\eqref{eq_stateop} can describe the Nash equilibrium trajectories of an $N$-player game in \emph{open-loop strategies}, as explained in \Cref{rmk_corrtogam}. 
We are particularly interested in the study of \emph{sparse} games, in which interactions between the $N$ players, occurring through the functions $f^i$ and $g^i$, are determined by some sequence of sparse \emph{underlying graphs} $\Gamma_N$ with vertices in $\DSet N$, in the sense that the functions $f^i$ and $g^i$ only depend on $x^i$ and $x^j$ for $j \sim i$---that is, there is a (possibly directed) edge of $\Gamma_N$ connecting $j$ to $i$---and the number of neighbours of each $i \in \DSet N$ is bounded as $N$ grows. This will be formally stated among \Cref{assOL}.

Note that if $\Gamma$ was complete---that is, $j \sim i$ for all $i,j \in \DSet N$ with $j\neq i$---and $f^i$ and $g^i$ depended on the average of $(X^j)_{j \sim i}$ (and in no other way on $i$) then we would be in the setting of \emph{mean field game} (MFG) theory. Introduced independently by Lasry and Lions \cite{LL07} and Huang, Caines and Malham\'e \cite{hcm}, MFG theory provides a successful model for the study of dynamic games with many players that are both \emph{symmetric}---that is, indistinguishable from one another---and individually \emph{negligible} as the number of players grows (each interaction scales as $1/N$ in the sense that $D_j f^i, D_j g^i \lesssim 1/N$ for $j \neq i$); in such framework, one has a decentralised and symmetric \emph{limit model} for infinitely many players, involving the equilibrium between a typical player and a mass of agents which is realised by a feedback function of only the state of the player, is identical for all the players, and can be computed just by observing the distribution of the population of players at the initial time. See, for example, the book \cite{cardelar} or the lecture notes \cite{CPnotes} for a recent account on MFG theory. In particular, the MFG paradigm presumes that we have an sequence of \textit{complete} graphs $(\Gamma^N)_{N \in \N}$ on $N$ vertices, hence of degree $N-1 \to \infty$; on the contrary, here we are interested in addressing the antipodal problem when $\sup_{i \in \DSet N,\, N \in \N} \deg_{\Gamma_N}\!(i) < \infty$.

When the structural assumptions of an $N$-player dynamic game as described above fall outside the MFG paradigm---that is, the symmetry is broken and/or the scaling of the interaction is changed---one enters the broader framework of \emph{network games} (or \emph{graph games}), which raised increasing interest in the recent years. A characterisation of large population limits is still possible through a generalised mean field description based on the notion of \emph{graphon}, under the fundamental assumption that $\inf_{i \in \DSet N} \deg_{\Gamma_N}(i) \xrightarrow{N\to\infty} \infty$ (at a suitable rate), so one refers to the graph structure as \emph{dense}. See, for instance, \cite{Carmona2,BWZ,CainesHuang,DelarueESAIM,LackerSoretLabel} and the references therein.
On the other hand, when the underlying network (equivalently, graph) structure is sparse, as in the case of our interest, less results are available at the present time. Such games seem to be more elusive because of the lack of a well-established limit model, yet attention on them is rising as they seem more suitable than MFGs for modelling (competitive) systems with limited and local interactions. We refer the reader to \cite{CainesIEEE22,CainesHuACC24,CainesHuIEEE24,LackRamWu} and references therein for further discussion and to have a look at the main approaches to large population sparse games, namely \emph{vertexon-graphexon} and \emph{local weak} limits.

The crucial difference with the dense regime is the loss of the aforementioned negligibility assumption, and thus basically the loss of a fruitful ground to have a Law of Large Numbers: if the costs of the players do not depend on cumulative functions of a diverging number of variables, one is led to expect that most of the players should have a small influence on a given one, with no cumulative effect arising from their interactions in the large population limit. In other words, given a player, we expect players that are far from it with respect to the graph distance to be asymptotically \emph{unimportant} in determining the optimal distribution of that player.

This mechanism of ``independency'' between players that are far in the above sense has been first observed in \cite{LackerSoret} by means of correlation estimates, and then in \cite[Chapter~1]{PHDT} under the more analytic point of view that we are going to present in this work; see also the recent preprint \cite{IMPC} for similar observations on this phenomenon. We have also addressed in \cite{CR24} a similar problem, deriving analogous information: for \emph{closed-loop} games on sparse graphs with a chain structure, we have quantified the influence of player $j$ on player $i$ by estimating the derivative $D_j u^i$ of the solution of the corresponding \emph{Nash system}---the analogue of \eqref{pspde} to describe closed-loop equilibria---in order to show that $|D_j u^i| \to 0$ as $d_{\Gamma}(i,j) \to \infty$,\footnote{$d_\Gamma$ denotes the standard distance on the graph $\Gamma$---that is, $d_\Gamma(i,j)$ is defined as the length $\ell$ of the shortest path $j \sim k_{\ell-1} \sim \cdots \sim k_{1} \sim i$.} and similarly for second-order derivatives. The reader can also have a look at \cite{FFI1,FFI2} for further results about linear-quadratic differential games on chain networks. As a byproduct of the estimates in \cite{CR24}, we have also shown that such a decay is fast enough to guarantee the well-posedness of the linear-quadratic Nash system in infinitely many dimension, while a more general setting is addressed in \cite{R_IDNS}, whereas it is well-known that in the MFG framework the so-called \emph{master equation} needs to be used as the natural corresponding object arising in the large population limit, as first explained in \cite{LionsSeminar} and detailed in \cite{CDLL}.

Here we focus on open-loop games governed by general (sparse) graphs, with the goal of depicting the \emph{unimportance of distant players} presented above under a different perspective. Under the hypotheses of MFG theory, one exploits the averaging structure of the costs to reduce the prospective infinitely many equations describing Nash equilibria in the large population limit to the well-known two-equation \emph{mean field system}; here, for the reasons discussed above, we have no hope to be able to exploit any aggregate effect coming into play as the number of player grows. On the contrary, we shall show that, if one wishes to find (with a small error) the optimal trajectory of a given player, a \emph{reduction} of the complexity of the system to be studied is possible by going in the opposite direction---that is, ``cutting out'' of the game distant players and thus considering a new game governed by just a subgraph of the original one. Most importantly, we obtain explicit estimates that quantitatively capture the interplay between the ``costs'' $f$ and $g$ and the ``shape'' of the graph $\Gamma$ in their contribution to the unimportance of distant players.

We also point out that we have chosen to work in a regime where such a reduction is expected to be possible independently of the length of time horizon---that is, of how large $T$ is---so to emphasise that such a phenomenon is intrinsic of the sparse nature of the game and it is not a consequence of any short-time stability of system~\eqref{eq_state}: our assumptions require in particular $f^i$ and $g^i$ to be convex with respect to $x^i$, see Assumption~\ref{assOL}(c). Nevertheless, all our results can be easily extended to more general settings (such as $f^i$ and $g^i$ only semi-convex in $x^i$) by paying the price of $T$-dependent constants, as done in \cite{PHDT}; the message in this case would be that the length of the horizon surely influences the unimportance of distant players, while the sparse structure still intervene in providing bounds that get worse slower as $T$ diverges with respect to those one would obtain for a game on a dense graph.

In a similar spirit to \cite{CR_ENS,CJR}, our method is \emph{non-asymptotic} in nature; we are not going to deal with (nor exploit) possible limit models here, although our results, interpreted with the large population limit in mind, can also provide the basic step towards quantitative convergence estimates as well as well-posedness of system~\eqref{pspde} in infinitely-many dimensions, by following an approach akin to \cite{CR24,R_IDNS}.

Our main result is \Cref{thmOL}, presented in the next section. We will also give two variations of it: the first one (\Cref{thmOLvar}) allows to obtain also the interpretation of the unimportance in terms of the decay of the derivatives of $v$, which in open-loop games plays the role similar to that of $(D_iu^i)_{i\in\DSet N}$ in closed-loop games; the second one (\Cref{thmD}) shows that the strategy of the proof of \Cref{thmOL} is in fact robust enough to extend it to a wider class of games, including those in \emph{distributed} strategies. %Some illustrative examples are presented in \Cref{sec_ex}, while \Cref{sec_proofs} collects the proofs of the above-mentioned theorems, also making use of two general lemmas contained in \Cref{sec_tl}.

\subsection{Acknowledgements}
The authors acknowledge the support of the Italian (EU Next Gen) PRIN project 2022W58BJ5 (\emph{PDEs and optimal control methods in mean field games, population dynamics and multi-agent models}, CUP E53D23005910006) and of Gruppo Nazionale per l’Analisi Matematica, la Probabilità e le loro Applicazioni (GNAMPA) of the Istituto Nazionale di Alta Matematica (INdAM). D.F.R. also supported by Fondazione Cassa di Risparmio di Padova e Rovigo and by the MUR Department of Excellence project \emph{MatMod@TOV} (awarded to the Department of Mathematics, University of Rome Tor Vergata, CUP E83C23000330006).

\section{Set-up and main results}  %\label{sec_OL}

\subsection{Unimportance of distant players in open-loop games}
We consider the dynamics given by \eqref{eq_stateop}--\eqref{eq_opconOL}--\eqref{pspde} as the equilibrium for an $N$-player open-loop game, with the following structural assumptions.

\begin{asss} \label{assOL} 
The following hold, for each $i \in \DSet N$:
\begin{enumerate}[label=(\alph*),leftmargin=*]
\item the Hamiltonian $H^i$ satisfies 
\[
- D_x H^i\bigr|^{(x,p)}_{(\bar x,\bar p)} \cdot (x - \bar x) + D_p H^i\bigr|^{(x,p)}_{(\bar x,\bar p)} \cdot (p - \bar p) \geq \kappa^i \Bigl| D_p H^i\bigr|^{(x,p)}_{(\bar x,\bar p)} \Bigr|^2 \qquad \forall \, (x,p),(\bar x,\bar p) \in \R^d \times \R^d;
\]
\item there exists $\call N_i \subset \{0,\dots,N-1\} \setminus \{i\}$ such that
\[
D_{j} f^i = D_{j} g^i = 0 \quad \text{for} \quad j \notin \call N_i \cup \{i\};
\]
\item there exist $K_f^i,\ell_f^i > 0$ such that
\[
D_i f^i(t,\var)\bigr|^{\bs x}_{\bs y} \cdot (x^i - y^i) \geq K_f^i |x^i - y^i|^2 - \ell_f^i \sum_{j \in \call N_i} | x^j - y^j |^2 \qquad \forall\, \bs x, \bs y \in (\R^d)^N, \, t \in [0,T],
\]
and likewise for $g$, where we used the notation $h\bigr|^{x}_y \defeq h(x) - h(y)$.
\end{enumerate}
\end{asss}

For instance, considering the correspondence with games highlighted in the Introduction, if $b^i(x,a) = a$, then Assumption~\ref{assOL}(a) is equivalent to $L^i$ being jointly convex, $\kappa^i$-strongly in the control variable. Assumption~\ref{assOL}(b) is the one introducing the graph structure, as discussed in the following \Cref{rmk_NtoG}, while Assumption~\ref{assOL}(c) encodes the strong convexity of $f^i$ and $g^i$ with respect to player $i$'s own state as well as their Lipschitz continuity, that in general also carries the information about the scaling of the interactions.

Besides these assumptions, we will also suppose that system~\eqref{pspde} has a solution of class $C^{1,2}$ with bounded derivatives, and we will refer to it as an \emph{admissible solution}. Note that this is compatible with the expected growth of $v$ in view of \Cref{assOL}(a,c).

\begin{remark} \label{rmk_NtoG}
The subsets $\call N_i$ can be thought to determine, or be determined by, an underlying, possibly directed, graph $\Gamma$ where the node $i$ is connected to the node $j$ if and only if $j \in \call N_i$. Following this interpretation, we will denote by $\call N_i^{(k)}$ the set of all nodes $j$ such that the shortest (directed) path connecting $i$ to $j$ has length $k$ and use the notation
\[
j \sim_k i \quad \iff \quad j \in \call N_i^{(k)},
\]
for each $k \geq 0$ (omitting the subscript $k$ if it is $1$---that is, we will simply write $j \sim i$ instead of $j \sim_1 i$). Note that such a definition amounts to letting $\call N_i^{(0)} \defeq \{i\}$ and 
\begin{equation*}% \label{eq_kneigh}
\call N_i^{(k)} \defeq \bigcup_{j \in \call N^{(k-1)}_i} \call N_j \setminus \bigcup_{h<k} \call N^{(h)}_i \qquad \forall \, k \geq 1.
\end{equation*}
We will also call
\[
n_i \defeq \#\call N_i
\]
the \emph{in-degree} of node $i$, and consequently the \emph{out-degree} of node $i$ will be given, in terms of the subsets $\call N_j$, by
\[
\check n_i \defeq \sum_{j \in \DSet N} \1_{\call N_j}(i),
\]
where $\1_{\call S}$ denotes the characteristic function of the set $\call S$.
%The maximum out-degree and the maximum in-degree will be denoted by $n \defeq \sup_{i\in \DSet N} n_i$ and $\check n \defeq \sup_{i \in \DSet N} \check n_i$, respectively.
\end{remark}

We are going to fix a player ``of interest'' (without loss of generality, indexed by $i=0$) and a set of indices $I = I_0 \subset \DSet N$ that contains $0$, in order to compare that player's dynamics to the one given as follows: let $Z^i_t$ be any $\R^d$-valued stochastic processes with bounded second moment and define
\begin{equation*}%\label{eq_redcosts}
\hat f^i(t,\bs{\hat x}) \defeq \bb E\bigl[ f^i(t,\bs{\hat x}, \bs Z_t)\bigr], \qquad \bs{\hat x} \in (\R^d)^{I}, \ i \in I,
\end{equation*}
and likewise for $\hat g^i$, where the the $i$-th coordinate of the vector $(\bs y, \bs Z_t)$ is $y^i$ if $i \in I$ and $Z^i_t$ otherwise; then let
\begin{equation} \label{eq_stateopred}
\begin{dcases}
\di \hat X^{*,i}_t =  \hat\beta^{*,i}_t \,\di t + \sigma^i\,\di B_t^i & \text{in}\ [0,T] \\
\hat X_{0}^{*,i} \sim m_0^i \in \Pc_2(\R^d)
\end{dcases}
\qquad i \in I,
\end{equation}
where
\begin{equation} \label{eq_opconOLred}
\hat \beta^{*,i}_t = - D_p H^i(\hat X^{*,i}_t, \hat v^i(t,\bs{\hat X}{}^*_t)),
\end{equation}
with $\hat v = (\hat v^i)_{i \in I} \colon [0,T] \times (\R^d)^{I} \to (\R^d)^I$ solving
\begin{equation} \label{pspdered}
\begin{dcases}
-\de_t \hat v^i -  \tfrac12 \sum_{j \in I} \tr(\Sigma^j D_{jj} \hat v^i)  + D_xH^i(x^i,\hat v^i) + \sum_{j \in I} D_j \hat v^i D_pH^j(x^j,\hat v^j) = D_i \hat f^i \\[-3pt]
\hat v^i(T,\var) = D_i \hat g^i
\end{dcases}
\qquad i \in I.
\end{equation}

Having in mind the interpretation the link with games noted in the Introduction, the above construction can be rephrased as follows: we want to compare the optimal trajectory of player $0$ in the original $N$-player game with that for a \emph{reduced} version of it, centered at player $0$, with states given by
\begin{equation*} %\label{eq_redsde}
\begin{dcases}
\di \hat X^i_t = b^i(X^i_t,\alpha_t^i) \,\di t + \sigma^i\,\di B_t^i & \text{in}\ [0,T] \\
\hat X_0^i \sim m_0^i \in \Pc_2(\R^d)
\end{dcases}
\qquad i \in I
\end{equation*}
and costs
\[
\hat J^i(\alpha) = \bb E \biggr[\int_0^T \Bigl( L^i(\hat X^i_t,\alpha^i_t) + \hat f^i(t,\bs{\hat X}_t) \Bigr)\,\di t + \hat g^i(\bs{\hat X}_T) \biggr], \qquad i \in I.
\] 
In particular, if $I$ is a neighbourhood of $0$ on the graph induced by the $\call N_i$'s (recall \Cref{rmk_NtoG}), then this is basically a localisation of the $N$-player game around player $0$. In fact, by Assumption~\ref{assOL}(b), $\hat f^i$ coincides with $f^i$ for $\call N_i \subset I$; if instead there exists $j \in \call N_i \setminus I$, we are saying that the cost of player $i$ is built form the cost of the $N$-player game by making an arbitrary guess on the behaviour of player $j$. Also note that actually the relevant processes among the $Z^i_t$'s are only those with $i \in \bigcup_{j \in I} \call N_j \setminus I$.

Our key estimate is the following; in its statement, $\call W_2$ denotes the usual Wasserstein-$2$ distance on $\call P_2(\R^d)$, and $\frk m_2(m^{*,j}_t)$ and $\frk m_2(Z^j_t)$ are the second moments of $X^{*,j}_t \sim m^{*,j}_t$ and $Z^j_t$, respectively.

\begin{thm} \label{thmOL}
Let \Cref{assOL} be in force. Let $v$ and $\hat v$ be admissible solutions to \eqref{pspde} and \eqref{pspdered}, respectively, and let $X^{*,i}_t \sim m^{*,i}_t$ and $\hat X^{*,i}_t \sim \hat m^{*,i}_t$ be the solutions to \eqref{eq_stateop}--\eqref{eq_opconOL} and \eqref{eq_stateopred}--\eqref{eq_opconOLred}, respectively. Suppose that $I$ is the set of indices at distance at most $r$ from $i=0$, that is,
\begin{equation} \label{asssat}
I = \bigcup_{k \in \DSet r} \call N^{(k)}_0.
\end{equation}
for some $r \in \N$. There exist constants $\theta^*\in(0,1)$ and $\gamma^{(r)} \geq 0$ such that, if
\begin{equation} \label{cond_smallness}
\theta \defeq \sup_{i \in I} \Bigl( \frac{\ell_g^i}{\inf_{j \sim i}( \frac{\kappa^j}{8T} + K_g^j)} \vee \frac{\ell_f^i}{\inf_{j \sim i} (\frac{\kappa^j}{8T^2} + K_f^j)} \Bigr) \leq \theta^*,
\end{equation}
then
\begin{equation} \label{mainform}
 \fint_0^T \call W_2(m^{*,0}_\cdot,\hat m^{*,0}_\cdot)^2 \leq C_r \gamma^{(r)} \sum_{j \sim_{r} 0} \sup_{[0,T]} \bigl( \frk m_2(m^{*,j}_\cdot) + \frk m_2(Z^j_\cdot) \bigr) ,
\end{equation}
with $C_r$ depending only on $\kappa^i,K_g^i,K_f^i$, $i \in \call N_0^{(r)} \cup \{0\}$. In particular, if there is $\frk n \geq 1$ such that
\begin{equation} \label{eq_Nhkbdd}
N^h_k \defeq \# \{ j \in \DSet N :\ k \sim j \sim_h 0 \} \leq \frk n \qquad \forall\, h \in \N, \ k \in \DSet N,
\end{equation}
then for each $\bar\gamma \in (0,1)$ one can choose $\theta^*$ depending only on $\frk n$ and $\bar\gamma$, and such that
\begin{equation} \label{eq_gammaexpd}
\gamma^{(r)} \leq \bar\gamma^r.
\end{equation}
%More explicitly, there is a sequence of non-negative numbers $(\gamma_h)_{h \in \N}$ satisfying $\gamma_0 = \theta$ and
%\begin{equation} \label{eq_boundgamma}
%\Bigl( {\displaystyle 1 - \theta \sum_{i = 0}^{h} \sup_{k \sim_i 0} N_k^h \prod_{j=i}^{h-1} \gamma_{j}} \Bigr) \gamma_{h} = {\displaystyle \theta \sup_{k \sim_{h+1} 0} N_k^h} \qquad \forall \, h \in \DSet r \setminus \{0\},
%\end{equation}
%where
%\begin{equation} \label{eq_Nkhdef}
%N_k^h \defeq \# \{ j \in \DSet N :\ k \sim j \sim_h 0 \},
%\end{equation}
%such that
%\begin{equation} \label{eq_boundgammar}
%\gamma^{(r)} = \prod_{h\in\DSet r} \gamma_h.
%\end{equation}
\end{thm}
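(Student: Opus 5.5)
The plan is to use the Pontryagin FBSDE representations of both systems, driven by the same Brownian motions and the same initial data, and to run a monotonicity (duality) argument in the spirit of the uniqueness proofs for FBSDEs.

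\textbf{Coupling and the single-player inequality.} Set $Y^i_t \defeq v^i(t,\bX^*_t)$ and $\hat Y^i_t \defeq \hat v^i(t,\bs{\hat X}{}^*_t)$. By It\^o's formula and \eqref{pspde}, \eqref{pspdered}, each $Y^i$ solves $\di Y^i_t = (D_xH^i(X^{*,i}_t,Y^i_t) - D_if^i(t,\bX^*_t))\,\di t + \di M^i_t$ with terminal value $D_ig^i(\bX^*_T)$. The same holds for $\hat Y^i$ with $\hat f,\hat g$. Fix $i\in I$ and write $\delta X^i \defeq X^{*,i}-\hat X^{*,i}$ and $\delta Y^i \defeq Y^i-\hat Y^i$. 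Then $\delta X^i_0=0$, so It\^o's formula for $\delta X^i\cdot\delta Y^i$ on $[0,T]$ gives
\[
\bb E\bigl[\delta X^i_T\cdot \delta Y^i_T\bigr] + \bb E\!\int_0^T \Bigl(-D_xH^i|\cdot\delta X^i + D_pH^i|\cdot \delta Y^i\Bigr) + \bb E\!\int_0^T D_if^i|\cdot \delta X^i = 0 .
\]
Assumption~\ref{assOL}(a) bounds the middle term from below by $\kappa^i\,\bb E\int_0^T|\delta\beta^i|^2$. Since $\delta X^i_t=\int_0^t\delta\beta^i$, this gives $\kappa^i\,\bb E\int|\delta\beta^i|^2\geq \frac{\kappa^i}{cT^2}\,\bb E\int_0^T|\delta X^i|^2$ and $\geq \frac{\kappa^i}{cT}\,\bb E|\delta X^i_T|^2$, which is where the factors $\kappa/(8T^2)$ and $\kappa/(8T)$ in \eqref{cond_smallness} come from.

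\textbf{Estimating the cost terms.} For $i$ with $\call N_i\subset I$, the cost terms compare $D_if^i(\bX^*)$ with $D_if^i(\bs{\hat X}{}^*,\bs Z)=D_if^i(\bs{\hat X}{}^*)$. Assumption~\ref{assOL}(c) then gives $\geq K_f^i\,\bb E|\delta X^i|^2-\ell_f^i\sum_{j\in\call N_i}\bb E|\delta X^j|^2$, and similarly at time $T$. For $i$ in the outer layer, with neighbours $j\notin I$, we replace $\hat X^{*,j}$ by $Z^j$; the analogous term involves $X^{*,j}-Z^j$, whose squared norm is bounded by $2(\frk m_2(m^{*,j})+\frk m_2(Z^j))$. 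Putting $A^i \defeq \bb E\int_0^T|\delta X^i|^2 + T\,\bb E|\delta X^i_T|^2$ (suitably weighted), we obtain a family of inequalities
\[
A^i \leq \theta \sum_{j\in\call N_i} A^j \quad (i \text{ with } \call N_i\subset I), \qquad A^i \leq \theta \sum_{j\in\call N_i\cap I}A^j + \theta\sum_{j\in \call N_i\setminus I} E^j ,
\]
where $E^j$ is the moment term. Here $\theta$ is as in \eqref{cond_smallness}, and the normalising constants sit in the denominators with $\inf_{j\sim i}$. Some care is needed about which index's coercivity is used. A cleaner route is to sum the $i$-th inequality with weights and split the interaction term by Young's inequality; this is the reason for the infimum over neighbours.

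\textbf{Iterating over layers, and the main obstacle.} Organise the $A^i$ by the layers $\call N_0^{(h)}$, $h\leq r$, and propagate inward from the boundary. The main difficulty is that edges go both forward and backward between layers, and also within a layer, so the system is not triangular. I would handle this with a layer-by-layer elimination. First I would show by induction on $h$ descending from $r$ that $\sup_{k\sim_h 0}A^k\leq \gamma_h\sup_{k\sim_{h-1}0}A^k + (\text{boundary terms})$. Each step absorbs the same-layer and outer-layer contributions into the left-hand side, using $\theta\,N^h_k$ small. This produces the recursion of the form $(1-\theta\sum\cdots)\gamma_h=\theta\sup N^h_k$. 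Chaining these bounds down to $h=0$ gives \eqref{mainform} with $\gamma^{(r)}=\prod_h\gamma_h$, and then $\fint_0^T\call W_2^2\leq T^{-1}A^0$ by the coupling. Under \eqref{eq_Nhkbdd}, each $\gamma_h\leq \theta\frk n/(1-c\theta\frk n)\leq\bar\gamma$ once $\theta^*$ is small depending on $\frk n,\bar\gamma$, which yields \eqref{eq_gammaexpd}. Making this absorption rigorous, with the weights chosen consistently and without constants depending on $r$ or $N$, is the step I expect to be the hardest.
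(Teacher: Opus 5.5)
Your analytic part matches the paper. The synchronous coupling is written in the paper as the Fokker--Planck equation \eqref{synccouppde}, tested against $(v^i-\hat v^i)\cdot(x^i-\hat x^i)$ and $|x^i-\hat x^i|^2$, which is your It\^o computation. The same goes for Assumption~\ref{assOL}(a) with H\"older, the moment bound for neighbours in $\call N_0^{(r)}$, and the reduction to $C^i\le\theta\sum_{j\sim i}C^j$ with $C^i=(\frac{\kappa^i}{8}+K_g^iT)W^i_T+(\frac{\kappa^i}{8}+K_f^iT^2)\fint_0^T W^i$. No Young inequality is needed here: $\alpha'_iA^j\le\frac{\alpha'_i}{\inf_{j\sim i}\alpha_j}\alpha_jA^j$ already explains the infimum over neighbours.

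The gap is the layer iteration, which is the real content of the theorem (the paper's \Cref{lem_recsum}), and the scheme you sketch does not work as described.
\begin{itemize}
\item Your inequality $\sup_{k\sim_h0}A^k\le\gamma_h\sup_{k\sim_{h-1}0}A^k+(\dots)$, chained down to $h=0$, bounds the outer layers in terms of $A^0$. So $\prod_h\gamma_h$ ends up multiplying $A^0$, not the boundary data. To bound $A^0$ you would have to close the loop through the layer-$0$ inequality, and then all the decay must come from the ``boundary terms'' you never specify.
\item In a directed graph, a vertex in layer $h$ can have neighbours in every layer $\ell\le h+1$ (see the second lattice of \Cref{ex_dirg}). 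A single term in layer $h-1$ does not cover this.
\item With suprema over layers, the counts that appear are in-degrees $\#(\call N_i\cap\call N_0^{(\ell)})$, not the $N^h_k$. The $N^h_k$ only arise after \emph{summing} the vertex inequality over a layer and exchanging the sums, $\sum_{i\sim_h0}\sum_{j\sim i}C^j=\sum_{\ell\le h+1}\sum_{j\sim_\ell0}N^h_jC^j$. Since \eqref{eq_Nhkbdd} controls out-degree-type counts but not in-degrees of a directed graph, a sup-based scheme cannot give $\theta^*=\theta^*(\frk n,\bar\gamma)$.
\end{itemize}

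The missing idea is to work with layer sums $S_h\defeq\sum_{i\sim_h0}C^i$ and to induct \emph{outward}, proving $S_h\le\gamma_hS_{h+1}$ for $h=0,1,\dots$.
\begin{itemize}
\item If this holds for all $\ell<h$, then $S_\ell\le\prod_{j=\ell}^{h-1}\gamma_j\,S_h$ for every $\ell\le h$.
\item Hence in $S_h\le\gamma\sum_{\ell=0}^{h+1}\sup_{k\sim_\ell0}N^h_k\,S_\ell$, all same-layer and inner-layer contributions can be absorbed into the left-hand side, including backward edges of any length. Only the outer layer $h+1$ remains.
\item This is exactly the recursion you quote, with $\gamma\sup_{k\sim_{h+1}0}N^h_k$ in the numerator. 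Note that it comes from this outward induction, not from your descending one.
\item It yields $C^0=S_0\le\prod_{h\in\DSet r}\gamma_h\,S_r$, which is \eqref{mainform}.
\end{itemize}
Under \eqref{eq_Nhkbdd} the uniform bound is still not immediate, because the denominator of $\gamma_h$ has $h+1$ terms. You need to show inductively that $\gamma_h\le K\gamma/(1-K\gamma)$ (say $K=2\frk n$) for $\gamma$ small. Then these terms form a geometric series bounded independently of $h$ and $r$, which is what makes $\theta^*$ depend only on $\frk n$ and $\bar\gamma$.
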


The bound \eqref{mainform} provides a general \emph{non-asymptotic} estimate of the distance of the optimal trajectories of the player of interest in the initial and reduced games, without any needed assumptions on the shape of the interactions---that is, on the $\call N_i$'s. On the other hand, the relevant constants explicitly depend on $\Gamma$ (precisely, on the $N_k^h$'s defined in \eqref{eq_Nhkbdd}---see the proof of \Cref{lem_recsum}), so \Cref{thmOL} gives insights on the quantification of the interplay between the structure of the costs and shape of the graph in determining the (un)importance of distant players. In particular, the last part of the statement will be crucial for showing the unimportance of distant players (see \Cref{cor_red}), as it tells that when the $N_k^h$'s are uniformly bounded, the constant $\gamma^{(r)}$ exhibits an exponential vanishing rate; for instance, this happens if the out-degrees are uniformly bounded as one has the trivial bound
\begin{equation} \label{eq_trivbN}
\sum_{h \in \N} N_k^h \leq \check n_k.
\end{equation}
%Further considerations are made in \Cref{rmk_ip} and illustrative examples are proposed in \Cref{sec_ex}.

%\begin{remark} \label{rmk_gammaexp}
%The sequence $(\gamma_h)_{h \in \N}$ being non-negative implies an explicit upper bound for $\theta^*$ that depends on the structure of the graph $\Gamma$, namely
%Furthermore, by \eqref{eq_boundgamma}, $\gamma_h \to 0$ as $\theta \to 0$; therefore, by \eqref{eq_boundgammar} there is a threshold $\bar\theta^* \in (0,1)$ such that, if $\theta \leq \bar\theta^*$, then $\gamma^{(r)}$ vanishes at exponential rate as $r \to \infty$. In particular, the trivial bound
%\begin{equation} \label{eq_trivbN}
%\sum_{h \in \N} N_k^h \leq \check n_k,
%\end{equation}
%along with \eqref{eq_boundgamma}, \eqref{eq_boundtheta} and \eqref{eq_boundgammar}, guarantees that $\bar\theta^* \geq \nu/\sup_{k \in \DSet N} \check n_k$, for some $\nu$ independent of $r$.
%\end{remark}

\begin{remark}
The requirement \eqref{cond_smallness} entails a strong form of displacement monotonicity on the long time horizon. Indeed, \eqref{eq_trivbN} and the explicit upper threshold \eqref{eq_boundthetaA} on $\theta^*$ in the proof of \Cref{lem_recsum} implies that it is in general required that $\sup_{i \in I}\check n_i \leq \inf_{i \in I} \frac{K_\bullet^i}{\ell_\bullet^i}$ (with $\bullet \in \{f,g\}$) for \eqref{cond_smallness} to hold for any $T>0$; under such a condition, Assumption~\ref{assOL}(c) in turn implies 
\[
\sum_{i \in \DSet N} D_i f^i(t,\var)\bigr|^{\bs x}_{\bs y} \cdot (x^i - y^i) \geq 0 \qquad \forall\, \bs x, \bs y \in (\R^d)^N, \, t \in [0,T],
\]
that is the displacement monotonicity of $f=(f^i)_{i \in \DSet N}$ (and likewise for $g$).
\end{remark}

The proof of \Cref{thmOL} will be given in \Cref{sec_proofs}; we anticipate that it essentially consists in obtaining a recursive inequality by a synchronous coupling and then applying the technical \Cref{lem_recsum}. An immediate consequence of \Cref{thmOL} is the following, that makes more apparent how this first result relates to the unimportance of distant players; it applies to graphs with possibly countably many vertices.

\begin{cor} \label{cor_red}
Let \Cref{assOL} be in force, and let $I$ be as in \eqref{asssat}. Let $M>0$ be such that $\frk m_2(m^{*,j}_t),\frk m_2(Z^j_t)\leq M$ for all $j \in \DSet N$ and $t \in [0,T]$. Suppose that there is $\frk n \geq 1$ such that one has \eqref{eq_Nhkbdd} as well as $C_r \#\call N_0^{(r)} \leq \frk n^r$ for all $r \geq 1$, where $C_r$ is the constant in \eqref{mainform}. Then, for any $\epsilon > 0$ there exist $\theta^* \in (0,1)$ and $r^* \in \N$ (depending only on $\epsilon$, $M$ and $\frk n$), such that
\begin{equation} \label{eq_corest}
\begin{dcases}
\theta \leq \theta^* \\
r \geq r^*
\end{dcases}
 \quad \implies \quad \fint_0^T \call W_2(m^{*,0}_\cdot,\hat m^{*,0}_\cdot) < \epsilon.
\end{equation}
\end{cor}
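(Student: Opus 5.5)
This is a direct consequence of \Cref{thmOL}; I would apply it and then choose the parameters. First I fix $\bar\gamma \defeq 1/(2\frk n) \in (0,1)$. By the last part of \Cref{thmOL}, under \eqref{eq_Nhkbdd} there is a threshold $\theta^*$ depending only on $\frk n$ (through $\bar\gamma$) such that $\theta \leq \theta^*$ gives $\gamma^{(r)} \leq \bar\gamma^r$ for every $r$. It matters that this $\theta^*$ is uniform in $r$. I would check this in the proof of \Cref{lem_recsum}: the recursion defining the factors $\gamma_h$ involves only the bounds $N^h_k \leq \frk n$ and $\theta$, so the threshold does not deteriorate with $r$. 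This is also what lets the argument cover graphs with countably many vertices, since nothing depends on $N$.

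Next I plug the moment bounds into \eqref{mainform}. Since $\frk m_2(m^{*,j}_t), \frk m_2(Z^j_t) \leq M$, the sum on the right-hand side is at most $2M\,\#\call N_0^{(r)}$. Using the hypothesis $C_r\,\#\call N_0^{(r)} \leq \frk n^r$, this gives
\[
\fint_0^T \call W_2(m^{*,0}_\cdot,\hat m^{*,0}_\cdot)^2 \leq 2M\, C_r\,\#\call N_0^{(r)}\,\bar\gamma^r \leq 2M (\frk n\bar\gamma)^r = 2M\, 2^{-r}.
\]
By Jensen's (or Cauchy--Schwarz) inequality applied to the normalised time average,
\[
\fint_0^T \call W_2(m^{*,0}_\cdot,\hat m^{*,0}_\cdot) \leq \Bigl(\fint_0^T \call W_2(m^{*,0}_\cdot,\hat m^{*,0}_\cdot)^2\Bigr)^{1/2} \leq \sqrt{2M}\,2^{-r/2}.
\]

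Finally I choose $r^*$ as the smallest integer with $\sqrt{2M}\,2^{-r^*/2} < \epsilon$, which depends only on $M$ and $\epsilon$. Then \eqref{eq_corest} holds for all $r \geq r^*$ and $\theta \leq \theta^*$.

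The only delicate point is the uniformity of $\theta^*$ with respect to $r$. This has to come from the construction in \Cref{lem_recsum}, where $\gamma^{(r)}$ is a product $\prod_h \gamma_h$ with each $\gamma_h$ bounded by $\bar\gamma$ once $\theta$ is small depending only on $\frk n$. If that bound instead required $\theta^*$ to shrink with $r$, I would reverse the order of choices: fix $r^*$ first and then pick $\theta^*$. That version still proves the claim as stated, because the quantifiers in \eqref{eq_corest} allow $\theta^*$ to depend on $\epsilon$.
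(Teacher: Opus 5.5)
Your proof is correct and follows the paper's argument: pick $\bar\gamma\leq(2\frk n)^{-1}$ in \eqref{eq_gammaexpd}, with $\theta^*$ uniform in $r$ as the last part of \Cref{thmOL} asserts, so that the right-hand side of \eqref{mainform} is at most $2M\,2^{-r}$, and then take $r^*$ logarithmic in $M/\epsilon$. Your Jensen step is in fact more careful than the paper's choice $r^*=\log_2(2M/\epsilon)$, which only controls the squared average. Your closing fallback is unnecessary and would not work anyway, since fixing $r^*$ first and then choosing $\theta^*$ does not give a single threshold valid for every $r\geq r^*$.
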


\begin{proof}
Choose $\theta^*$ in such a way that \eqref{eq_gammaexpd} holds with $\bar\gamma < (2\frk n)^{-1}$; then it suffices to take $r^* = \log_2(2M/\epsilon)$.
\end{proof}

The above corollary can be read as follows: given a game based on a graph $\Gamma$ with a very large number of players (possibly infinitely many), in order to determine the optimal trajectory of our player $0$ ``of interest'' with an error less than $\epsilon$, it is sufficient to consider a reduced game based on the smallest subgraph of $\Gamma$ that contains $0$ and all nodes at distance at most $r^* = r^*(\epsilon)$ from $0$, where one assigns arbitrary trajectories $Z^j_t$ to the farthest nodes.

Notably, the result is independent of $T$, so it describes a phenomenon that is genuinely due to the structure of the game and that it is not a consequence of the shortness of the horizon.

We also point out that \Cref{cor_red} is meaningful in illustrating the phenomenon of the unimportance of distant players as long as $\call N_0^{(r^*)} \neq \emptyset$, and thus in general for \emph{large} sparse graphs. If the error $\epsilon^*$ we accept in the approximation of the optimal distribution of player $0$ is too small with respect to the structure of the $N$-player game (for example, $N$ is not large enough, or the interactions are too strong, or they spread too fast---which is encoded in the quantity $\theta$), then \eqref{eq_corest} holds with $\epsilon \leq \epsilon^*$ only if $I = \DSet N$ and thus it holds with $\epsilon = 0$ (as the right-hand side of \eqref{mainform} is $0$ in this case). Such an estimate is no longer comparing distributions relative to games of different dimensions, rather it is proving that the optimal distribution of player $0$ in the $N$-player game is unique, as formalised next.

\begin{cor} \label{cor_uniq}
Let $v_1,v_2$ be two solutions to \eqref{pspde} and let $m_1,m_2$ be the laws of the solutions to \eqref{eq_state} with $\alpha^i$ given by \eqref{eq_opconOL} where $v$ is replaced with $v_1,v_2$, respectively. Suppose that \eqref{cond_smallness} holds with $\theta^*$ as in \Cref{thmOL}. Then $m_1 = m_2$ and, if each $H^i$ is strongly convex in $p$, $v_1 = v_2$, $m_1$-a.e.
\end{cor}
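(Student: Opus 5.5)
The plan is to apply \Cref{thmOL} with $I = \DSet N$, comparing the system driven by $v_1$ with the one driven by $v_2$. Take $r$ large enough that $\bigcup_{k \in \DSet r} \call N^{(k)}_0 = \DSet N$; such an $r$ exists because $N$ is finite. If some nodes are unreachable from $0$, they cannot influence player $0$, and we simply apply the argument to the reachable component. Then $\hat f^i = f^i$ and $\hat g^i = g^i$, so the processes $Z^j$ play no role, and \eqref{pspdered} coincides with \eqref{pspde}. Hence $v_1$ can be taken as the admissible solution $v$ and $v_2$ as $\hat v$.

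The sum on the right-hand side of \eqref{mainform} runs over $j \sim_r 0$. Choosing $r$ one larger than the eccentricity of $0$ makes $\call N_0^{(r)} = \emptyset$, so the right-hand side vanishes. More directly, the synchronous-coupling recursive inequality behind \Cref{thmOL} has no boundary term once $I$ is the whole vertex set. Since both systems use the same Brownian motions and the same initial data, we obtain $\int_0^T \bb E|X_1^{0}-X_2^{0}|^2 = 0$ under the synchronous coupling. By continuity in time, $X^0_{1,t} = X^0_{2,t}$ a.s. for all $t$, so $m_1^0 = m_2^0$.

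The role of player $0$ was arbitrary: the smallness condition \eqref{cond_smallness} is uniform over $I = \DSet N$. Relabelling each player $i$ as the player of interest therefore gives $X^i_{1} = X^i_{2}$ a.s. for every $i$. This is a pathwise identity under the common coupling, so the joint laws agree and $m_1 = m_2$.

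For the second claim, the drifts must coincide, because $X_1 - X_2 \equiv 0$ forces $\beta_1^{*,i} = \beta_2^{*,i}$ $\di t\otimes\di\bb P$-a.e. This gives
\[
D_pH^i\bigl(X^i_t, v_1^i(t,\bX_t)\bigr) = D_pH^i\bigl(X^i_t, v_2^i(t,\bX_t)\bigr).
\]
If $H^i$ is strongly convex in $p$, then $D_pH^i(x,\var)$ is injective, so $v^i_1 = v^i_2$ at $(t,\bX_t)$ for a.e. $t$, a.s. That is, $v_1 = v_2$ $m_1$-a.e.

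The step needing the most care is the first reduction. Either we verify that the proof of \Cref{thmOL} covers the degenerate case $\call N_0^{(r)}=\emptyset$, so that the constant $C_r\gamma^{(r)}$ multiplies an empty sum, or we rerun its coupling estimate with $I=\DSet N$ and note that the estimate closes with no remainder. In both routes the key point is that $\theta\le\theta^*$ makes the recursive inequality absorbable, which forces the coupled distance to be zero.
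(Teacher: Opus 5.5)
Your proposal is correct and is essentially the paper's proof: apply \Cref{thmOL} with each player in turn as the player of interest and $r$ large enough that $I=\DSet N$. The right-hand side of \eqref{mainform} is then an empty sum, and the synchronous coupling $\nu$ is concentrated on the diagonal. Your remark on nodes unreachable from the centre, handled by rerunning the coupling estimate on the reachable set, addresses a point the paper tacitly skips.

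For the second claim, the paper reads $D_pH^i(x^i,v_1^i)=D_pH^i(x^i,v_2^i)$ off \eqref{esttest1}, whose right-hand side now vanishes, whereas you obtain it from the pathwise identity of the trajectories. Both then conclude by injectivity of $D_pH^i(x,\cdot)$.
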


\begin{proof}
For each $i \in \DSet N$, apply \Cref{thmOL} with $i$ in place of $0$ (as the player of interest) and $r$ so large that $I = \DSet N$. Since $\call N_i^{(r)} = \emptyset$, \eqref{mainform} gives $\call W_2(m_1^i,m_2^i) = 0$. This also implies that, in the proof of \Cref{thmOL}, $\nu = (\mathrm{id}_{\R^d},\mathrm{id}_{\R^d})_\sharp m_1$; then, if $H^i$ is strongly convex, \eqref{esttest1} gives $v_1 = v_2$, $m_1$-a.e.
\end{proof}

\subsection{A different perspective on the unimportance in open-loop games}

Another way to quantify the unimportance of distant players in open-loop games is through decay estimates on the oscillation of the ``decoupling functions'' $v^i$ when only coordinates that are distant (in $\Gamma$) from $i$ vary. This point of view is reminiscent of the approach adopted in \cite{CR24,R_IDNS} in the study of closed-loop sparse games.

The key estimate in this direction is a first variation of \Cref{thmOL} making use of our second technical \Cref{lem_recsumvar}. 

\begin{thm} \label{thmOLvar}
Let \Cref{assOL} be in force. Let $v$ be an admissible solution to \eqref{pspde}, and let $X^{*,i}_t \sim m^{*,i}_t$ and $\hat X^{*,i}_t \sim \hat m^{*,i}_t$ be the solutions to \eqref{eq_stateop}--\eqref{eq_opconOL} with initial data $(m_0^k,(m^j_0)_{j \neq k})$ and $(\hat m_0^k,(m^j_0)_{j \neq k})$, respectively. There exist positive constants $\theta^*\in (0,1)$ and $\tilde\gamma^{(r)}>0$ such that, if \eqref{cond_smallness} holds with $I=\DSet N$ then
\begin{equation} \label{mainformvar}
\call W_2(m^{*,0}_T,\hat m^{*,0}_T)^2 + \int_0^T \call W_2(m^{*,0}_\cdot,\hat m^{*,0}_\cdot)^2 \leq C_k \tilde\gamma^{(r)} \call W_2(m_0^k,\hat m_0^k)^2 ,
\end{equation}
where $r = d_\Gamma(0,k)$ and $C_k$ only depends on $\kappa^0,K_g^0,K_f^0,\kappa^k,\norm{Dv^k}_\infty$. In particular, if there is $\frk n \geq 1$ such that \eqref{eq_Nhkbdd} holds, then for each $\bar\gamma \in (0,1)$ one can choose $\theta^*$ depending only on $\frk n$ and $\bar\gamma$, and such that
\begin{equation} \label{eq_lemrecvarbound}
\tilde\gamma^{(r)} \lesssim \bar\gamma^r,
\end{equation}
with implied constant depending only on $\frk n$.
\end{thm}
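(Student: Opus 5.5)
The plan is to follow the same route announced for \Cref{thmOL}: a synchronous coupling between the two equilibria, a Pontryagin-type duality computation that gives a recursive inequality along the graph, and then a recursive summation lemma (here the second technical \Cref{lem_recsumvar}) to turn that recursion into geometric decay in $r=d_\Gamma(0,k)$.

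\textbf{Coupling.} Both systems are driven by the same decoupling field $v$, since the game is unchanged and only the initial law of player $k$ differs. I will therefore couple the two solutions $\bs X^*$ and $\bs{\hat X}{}^*$ of \eqref{eq_stateop}--\eqref{eq_opconOL} synchronously:
\begin{itemize}
\item same Brownian motions $B^i$ for all players;
\item same initial conditions $X^j_0=\hat X^j_0$ for $j\neq k$;
\item an optimal $\call W_2$-coupling of $(X^k_0,\hat X^k_0)$.
\end{itemize}
I will write $Y^i_t=v^i(t,\bs X^*_t)$ and $\hat Y^i_t=v^i(t,\bs{\hat X}{}^*_t)$, together with $\delta X^i=X^{*,i}-\hat X^{*,i}$ and $\delta Y^i=Y^i-\hat Y^i$. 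By Itô's formula and \eqref{pspde}, $(X^i,Y^i)$ solves the Pontryagin FBSDE
\[
\di Y^i_t=(D_xH^i-D_if^i)\,\di t+\text{mart.},\qquad Y^i_T=D_ig^i(\bs X_T).
\]

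\textbf{Energy identity for each player.} I will apply Itô to $\delta X^i\cdot\delta Y^i$ and take expectations. Assumption~\ref{assOL}(a) gives a term $\kappa^i\,\bb E\int|D_pH^i|^{\cdot}_{\cdot}|^2=\kappa^i\,\bb E\int|\delta\dot X^i|^2$. Assumption~\ref{assOL}(c) gives $K_f^i\,\bb E\int|\delta X^i|^2+K_g^i\,\bb E|\delta X^i_T|^2$, minus $\ell$-terms over $j\in\call N_i$. Only the boundary term at time $0$ remains on the other side, namely $\bb E[\delta X^i_0\cdot\delta Y^i_0]$, and it vanishes for $i\neq k$.

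To avoid the degeneracy when $K_f^i$ is small, I will use a Poincaré-type estimate in time:
\[
\int_0^T|\delta X|^2\lesssim T^2\int_0^T|\delta\dot X|^2+T|\delta X_T|^2 .
\]
This turns the $\kappa^i$-term into the $\kappa/(8T^2)$ and $\kappa/(8T)$ contributions, which is exactly why \eqref{cond_smallness} has that form. Writing $e_i\defeq\bb E|\delta X^i_T|^2+\bb E\int_0^T|\delta X^i|^2$, this yields
\[
e_i\le\theta\sum_{j\in\call N_i}\frac{c_j}{c_i}e_j\quad(i\neq k),\qquad e_k\le C\,\bb E[\delta X^k_0\cdot\delta Y^k_0]+\theta\sum_{j\in\call N_k}(\cdots).
\]

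\textbf{Closing at player $k$.} Bounding $|\delta Y^k_0|\le\norm{Dv^k}_\infty|\delta\bs X_0|=\norm{Dv^k}_\infty|\delta X^k_0|$ shows $e_k\lesssim C\,\call W_2(m^k_0,\hat m^k_0)^2+\theta(\text{neighbours})$. This is where the dependence of $C_k$ on $\norm{Dv^k}_\infty$ and $\kappa^k$ enters.

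\textbf{Recursion and decay.} I then have a linear system of inequalities $e\le\theta Ae+s\,\delta_k$ indexed by the graph, with a single source at node $k$. Organising the nodes by their distance $h$ from $0$ and propagating from $0$ outward, \Cref{lem_recsumvar} should give
\[
e_0\le\tilde\gamma^{(r)}C\,\call W_2(m_0^k,\hat m_0^k)^2,\qquad \tilde\gamma^{(r)}=\prod_h\gamma_h,
\]
where the $\gamma_h$ are defined through the counts $N^h_k$. The conclusion \eqref{mainformvar} then follows from $\call W_2(m^{*,0}_t,\hat m^{*,0}_t)^2\le\bb E|\delta X^0_t|^2$. Under \eqref{eq_Nhkbdd}, choosing $\theta^*$ small in terms of $\frk n$ and $\bar\gamma$ makes each $\gamma_h\le\bar\gamma$, which gives \eqref{eq_lemrecvarbound}.

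\textbf{Main obstacle.} The hard step is this last propagation. Unlike in \Cref{thmOL}, there is no cut at distance $r$ supplying an a priori bound on the farthest layer. Instead, the whole graph, including nodes beyond $k$ and possibly infinitely many, feeds back into the estimate. One must therefore control the full sum $\sum_i e_i$ a priori: first sum all inequalities to get $\sum_i e_i\lesssim C\,\call W_2^2$ when $\theta\sup\check n<1$, and then run the layered recursion using this global bound. This costs the implied constant depending on $\frk n$ in \eqref{eq_lemrecvarbound}.
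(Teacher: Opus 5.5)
Your overall architecture matches the paper's proof. You use a synchronous coupling with $\delta X^j_0=0$ for $j\neq k$. Your Itô identity for $\delta X^i\cdot\delta Y^i$ is the SDE form of testing \eqref{synccouppde} by $(v^i(t,\bs x)-v^i(t,\bs{\hat x}))\cdot(x^i-\hat x^i)$. You add a source $\norm{Dv^k}_\infty W^k_0$ at node $k$ only, and you finish with \Cref{lem_recsumvar}. Two points need correcting.

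\textbf{The time-Poincar\'e inequality.} The version you wrote, with $T|\delta X_T|^2$ on the right, is the wrong one. Used to bound $\kappa^i\int|\delta\dot X^i|^2$ from below, it gives $\frac{\kappa^i}{cT^2}\int|\delta X^i|^2-\frac{\kappa^i}{T}|\delta X^i_T|^2$. That is a negative contribution to the weight of $W^i_T$, not the $\kappa^i/(8T)$ gain that \eqref{cond_smallness} requires. The paper uses the initial value instead. Since $\delta X^i_0=0$ for $i\neq k$, one has $|\delta X^i_t|^2\le t\int_0^t|\delta\dot X^i|^2$, which bounds both $\sup_{[0,T]}W^i$ and $\fint_0^T W^i$ by a multiple of $T\,\bb E\int_0^T|\delta\dot X^i|^2$; this is \eqref{esttest2}. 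At $i=k$ the same argument leaves an extra $2W^k_0$. Combined with $\bb E[\delta X^k_0\cdot\delta Y^k_0]\le\norm{Dv^k}_\infty W^k_0$, this produces the source term.

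\textbf{Your resolution of the ``main obstacle''.} The global summation does not give the stated dependence on $\frk n$ alone.
\begin{itemize}
\item Summing the inequalities over the nodes reachable from $0$ produces the coefficients $\sum_h N^h_j\le\check n_j$. So you need $\theta\sup_j\check n_j<1$, and your constants then depend on the maximal out-degree.
\item For undirected graphs this is harmless, since $\sum_hN^h_j\le 3\frk n$. But \eqref{eq_Nhkbdd} bounds each $N^h_j$ separately, not their sum over $h$.
\item Counterexample: let $a\in\call N_0$ also be an out-neighbour of every vertex on a long directed path starting at $0$. Then $N^h_a\le 1$ for all $h$, while $\check n_a$ is as large as the path. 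Your route therefore does not give $\theta^*=\theta^*(\frk n,\bar\gamma)$ in general.
\end{itemize}
The paper avoids any global bound. \Cref{lem_recsumvar} continues the layer-by-layer induction of \Cref{lem_recsum} past distance $r$, in the form $\sum_{i\sim_h0}C^i\le\tilde\gamma_hE^k\1_{h\ge r}+\gamma_h\sum_{i\sim_{h+1}0}C^i$. The products $\prod_j\gamma_j$ damp the back-couplings to inner layers, so only $N^h_j\le\frk n$ is needed. Accordingly $\tilde\gamma^{(r)}=\sum_{j\ge r}\gamma^{(j)}\tilde\gamma_j$, not $\prod_h\gamma_h$ as you wrote. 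Citing \Cref{lem_recsumvar} directly, as you do earlier, closes the proof.

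Under bounded out-degrees, though, your idea gives a shorter argument than the paper's. Since $k\sim_r0$, layers $0,\dots,r-1$ carry no source, so \Cref{lem_recsum} gives $C^0\le\gamma^{(r)}\sum_{i\sim_r0}C^i$. Bounding that last sum by the global one yields $\tilde\gamma^{(r)}=\gamma^{(r)}/(1-\theta\sup_j\check n_j)$.
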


In the form we stated it, \Cref{thmOLvar} also holds when $\DSet N = \N$. This is indeed true, provided that we can apply \Cref{lem_recsumvar} with $h^* = +\infty$. For instance, this is the case if we are under the assumptions of \Cref{cor_red} (cf.~\Cref{rmk_extreclemvar}).

\begin{remark} \label{eq_constwithi}
When the reference index is changed from $0$ to $i$, one obtains the same estimate with $i$ in place of $0$ as the player's index, and thus with $r_i \defeq d_\Gamma(i,k)$ in place of $r=r_0$ and a constant $C_{k,i}$ that only depends on $\kappa^i,K_g^i,K_f^i,\kappa^k,\norm{Dv^k}_\infty$, in place of $C_k$.
\end{remark}

%\begin{remark} \label{rmk_decv1}
%We already commented in \Cref{rmk_gammaexp} that for $\theta^*$ sufficiently small we can think of $\gamma^{(j)}$ as $\eta^j$ for some $\eta \in (0,1)$, and the same is clearly true for $\gamma^{(j)}_i$ with $i \neq 0$ as well. On the other hand, $\tilde\gamma_j \lesssim \tilde\eta^{j-r}$ for some $\tilde\eta \in (0,1)$ by its construction (see \Cref{lem_recsumvar}), and similarly $\tilde\gamma_{j,i} \lesssim \tilde\eta^{j-r_i}$. These bounds give 
%\[
%\sum_{j\geq r} \gamma^{(j)}\tilde\gamma_j \lesssim \eta^{d_\Gamma(i,k)}.
%\]
We consider \Cref{thmOL,thmOLvar} two different perspectives on the unimportance of distant players; indeed, the former shows that ignoring distant players causes a small deviation of the optimal trajectories, while the latter demonstrates that they are weakly influenced by variations of the (initial) states of distant players, and, moreover, it allows us to obtain the following decay estimate on the decoupling field $v$.
%\end{remark}

\begin{cor} \label{cor_Dvdec}
Let \Cref{assOL} be in force and let $v$ be an admissible solution to \eqref{pspde}. Suppose that \eqref{cond_smallness} holds with $I = \DSet N$, and that \eqref{eq_Nhkbdd} holds for some $\frk n \geq 1$. Then, for any $\eta \in (0,1)$ there is $\theta^*\in(0,1)$ (depending only on $\frk n$ and $\eta$) such that 
\begin{equation} \label{eq_vdecest}
\norm{D_k v^i}_\infty \lesssim \eta^{d_\Gamma(i,k)} \qquad \forall\,i,k \in \DSet N,
\end{equation}
where the implied constant only depends on $C_{k,i}$ (as in \Cref{eq_constwithi}), $T$, $\norm{D^2f^i}_\infty$, $\norm{D^2g^i}_\infty$ and $\norm{D^2H^i}_\infty$.
\end{cor}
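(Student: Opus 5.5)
The plan is to derive \eqref{eq_vdecest} from \Cref{thmOLvar}, specifically from its bound at the terminal time, combined with a representation of $D_kv^i$ along the forward--backward characteristics. Fix $i,k$ and a point $(t_0,\bs x)$. By restarting the dynamics at time $t_0$, we may assume $t_0=0$; the constants in \Cref{thmOLvar} do not grow with the horizon, so nothing is lost. Take deterministic initial data $\delta_{\bs x}$ and a second datum obtained by perturbing only the $k$-th coordinate, $x^k\mapsto x^k+\epsilon e$. In that case $\call W_2(m_0^k,\hat m_0^k)=\epsilon$.

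Writing $\bs X$ and $\hat{\bs X}$ for the two equilibrium processes driven by the same Brownian motions (synchronous coupling), we have
\[
v^i(0,\bs x+\epsilon e\,e_k)-v^i(0,\bs x)=\bb E\bigl[Y^i_0-\hat Y^i_0\bigr],
\qquad Y^i_t=v^i(t,\bs X_t),\quad \hat Y^i_t=v^i(t,\hat{\bs X}_t).
\]
These satisfy the backward equations $-\di Y^i_t=\bigl(D_i f^i(t,\bs X_t)-D_xH^i(X^i_t,Y^i_t)\bigr)\di t-Z\,\di B$, with terminal condition $Y^i_T=D_ig^i(\bs X_T)$.

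Subtracting the two backward equations and using $\norm{D^2f^i}_\infty$, $\norm{D^2g^i}_\infty$, $\norm{D^2H^i}_\infty$ together with a Gronwall argument backward in time gives
\[
\bb E|Y^i_0-\hat Y^i_0|\lesssim_T \sum_{j\in\call N_i\cup\{i\}}\Bigl(\bb E|X^j_T-\hat X^j_T|+\int_0^T\bb E|X^j_t-\hat X^j_t|\,\di t\Bigr).
\]
Only the neighbours of $i$ appear here, because of Assumption~\ref{assOL}(b).

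Next we need \Cref{thmOLvar} at the level of couplings rather than Wasserstein distances. Its proof, by synchronous coupling as announced before \Cref{cor_red}, in fact bounds $\bb E|X^j_T-\hat X^j_T|^2+\int_0^T\bb E|X^j-\hat X^j|^2$. Via \Cref{eq_constwithi} applied with player $j$ as reference, this quantity is $\lesssim C_{k,j}\,\bar\gamma^{\,d_\Gamma(j,k)}\epsilon^2$. Since $d_\Gamma(j,k)\geq d_\Gamma(i,k)-1$ for $j\in\call N_i$, and the number of such $j$ is controlled by the degree bound implied by \eqref{eq_Nhkbdd}, Cauchy--Schwarz yields
\[
|v^i(0,\bs x+\epsilon e\,e_k)-v^i(0,\bs x)|\lesssim \bar\gamma^{\,(d_\Gamma(i,k)-1)/2}\,\epsilon .
\]
Dividing by $\epsilon$ and letting $\epsilon\to0$ gives the estimate for $|D_kv^i(0,\bs x)|$. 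Choosing $\bar\gamma=\eta^2$ and then $\theta^*$ as in \Cref{thmOLvar} produces \eqref{eq_vdecest}, with the factor $\bar\gamma^{-1/2}$ absorbed into the implied constant.

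The main obstacle is the step where the decay is transferred from the Wasserstein statement of \Cref{thmOLvar} to the coupling-level bound on $\bb E|X^j-\hat X^j|^2$ that the BSDE difference requires. A Wasserstein bound alone does not control $\bb E|Y_0-\hat Y_0|$ through a common probability space. I would handle this by re-examining the proof of \Cref{thmOLvar}, where with deterministic initial data the coupling is synchronous, and checking that the constant $C_k$ depends on $\norm{Dv^k}_\infty$ but not on $\norm{Dv^i}_\infty$. This avoids circularity, since $\norm{Dv^k}_\infty$ is finite by admissibility.
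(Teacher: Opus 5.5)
Your proposal is correct and follows essentially the paper's proof. There too one takes Dirac initial data differing only in the $k$-th coordinate and writes $v^0(0,\bs x)-v^0(0,\bs{\hat x})$ as the integral of the differences of $D_0f^0$ and $D_0g^0$ against the synchronous coupling $\nu$ from the proof of \Cref{thmOLvar}. The paper writes this out only when $D_xH^0=0$ and leaves the general case to the Lipschitz continuity of $D_xH^i$ and Gr\"onwall's lemma, which is your backward Gr\"onwall step. It then bounds the result by the coupling distances between $X^j$ and $\hat X^j$ for $j\in\call N_0\cup\{0\}$ and concludes by \eqref{mainformvar} with each such $j$ as reference, shifting the initial time to $t_0$ and the reference player to $i$ exactly as you do.

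You are right to work with the coupling-level quantity $W^j_t=\int|x^j-\hat x^j|^2\,\di\pi^j_\sharp\nu_t$ rather than $\call W_2$: that is what the proof of \Cref{thmOLvar} actually controls, even though the paper writes $\call W_2$. The only inaccuracy is your claim that \eqref{eq_Nhkbdd} controls the degree. It bounds out-degree-type counts, not $\#\call N_i$, so the number of summands over $j\in\call N_i\cup\{i\}$ simply enters the implied constant, as it does implicitly in the paper.
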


\subsection{Unimportance of distant players in distributed games}

The general strategy of the proof of \Cref{thmOL} can also be used to show a similar result for games with \emph{distributed} controls. In this case, we consider the dynamics \eqref{eq_stateop} with
\begin{equation} \label{eq_opconD}
\beta^{*,i}_t = - D_p H^i(X^{*,i}_t,Dw^i(t,X^{*,i}_t))
\end{equation}
where $w = (w^i)_{i \in \DSet N} \colon [0,T] \times \R^d \to \R^N$ solves the following system of {Hamilton--Jacobi--Bellman} and {Fokker--Planck--Kolmogorov} (FPK) equations:
\begin{equation} \label{dispde}
\begin{dcases}
-\de_t w^i - \tfrac12 \tr(\Sigma^i D^2 w^i) + H^i(x,Dw^i) = \int_{(\R^d)^{N-1}} \!\! f^i(t,x,\bs y)\,\prod_{j \neq i} m^{*,j}_t(\di y^j) \\
\de_t m^{*,i} - \tfrac12 \tr(\Sigma^i D^2m^{*,i}) - \mathrm{div}(m^{*,i} D_pH^i(x,Dw^i)) = 0 \\[3pt]
w^i(T,\var) = \int_{(\R^d)^{N-1}} \!\! g^i(\cdot,\bs y)\,\,\prod_{j \neq i} m^{*,j}_T(\di y^j), \quad m^{*,i}|_{t=0} = m_0^i \in \Pc_2(\R^d) \\[-8pt]
\end{dcases} \vspace{8pt}
\quad i \in \DSet N.
\end{equation}
The FPK equations are understood in the sense of distributions and are no other than the PDE ``translation'' (via It\^o's formula) of the SDEs \eqref{eq_stateop} with drifts given by \eqref{eq_opconD}---that is, $m^{*,i}_t = \call L(X^{*,i}_t)$. The functions $f^i$ and $g^i$ are understood to be computed at $\bs z \in (\R^d)^N$ with $z^i = x$ and $z^j = y^j$ for $j \neq i$. Moreover, due to Assumption~\ref{assOL}(b), they actually only depend on $y^j$ if $j \sim i$, so
\begin{equation} \label{eq_trueintD}
\int_{(\R^d)^{N-1}} \!\! f^i(t,x,\bs y)\,\prod_{j \neq i} m^{*,j}_t(\di y^j) = \int_{(\R^d)^{n_i}}  \!\! f^i(t,x,\bs y)\,\prod_{j \sim i} m^{*,j}_t(\di y^j),
\end{equation}
and likewise for the integral with $g^i$.

Similarly to the connection we stressed between the former setting and open-loop games, in this case $\bs X^*$ can be thought as the vector of optimal trajectories of the game determined by \eqref{eq_state}--\eqref{eq_gcosts} when admissible controls are the so-called \emph{distributed} ones---that is, each $\alpha^i_t = \alpha^i(t,X^i_t)$ is a feedback function of the respective player's state. Formally, this could be considered an intermediate setting between open-loop and closed-loop strategies---that is, between controls just adapted to the Brownian motions' filtration and feedbacks of all players' states; nevertheless, it is worth noting that the distributed formulation entails by construction the independence of the players' states and it is a special case of the open-loop one that occurs by supposing each $\alpha^i$ to be only adapted to the idiosyncratic noise $B^i$ of its respective player. For this reason, such a setting can often be even more tractable than the open-loop one, hence it can be expected that \Cref{thmOL} extends to distributed equilibria in a rather straightforward manner.

We construct the reduced system (or game) similarly as before; more precisely, the players' states will be given by \eqref{eq_stateopred} with
\[
\hat \beta^{*,i}_t = - D_p H^i(\hat X^{*,i}_t,D\hat w^i(t,\hat X^{*,i}_t))
\]
for $\hat w$ that solves 
\begin{equation} \label{dispdered}
\begin{dcases}
-\de_t \hat w^i - \tfrac12 \tr(\Sigma^i D^2 \hat w^i) + H^i(x,D\hat w^i) = \int_{(\R^d)^{N-1}} \!\! f^i(t,x,\bs y)\,\prod_{j \neq i} \hat m^{*,j}_t(\di y^j) \\
\de_t \hat m^i - \tfrac12 \tr(\Sigma^i D^2 \hat m^i) - \mathrm{div}(\hat m^i D_pH^i(x,D\hat w^i)) = 0 \\[3pt]
\hat w^i(T,\var) = \int_{(\R^d)^{N-1}} \!\! g^i(\cdot,\bs y)\,\,\prod_{j \neq i} \hat m^{*,j}_T(\di y^j), \quad \hat m^i|_{t=0} = \hat m_0^i \\[-8pt]
\end{dcases} \vspace{8pt}
\quad i \in I,
\end{equation}
where 
\begin{equation*}% \label{eq_redmeas}
\hat m^{*,i} \defeq \call L(Z^i) \quad \text{for} \quad i \in \bigcup_{j \in I} \call N_j \setminus I;
\end{equation*}
in other words, $\hat m^{*,i} \in \Pc_2(\R^d)$ is arbitrarily assigned for $i \notin I$, also note that we do not really need to define $\hat m^{*,i}$ for all $i \in \DSet N$ as we have in fact the identity \eqref{eq_trueintD}.

The counterpart of \Cref{thmOL} is as follows. Then, all the considerations that stemmed from that theorem in the open-loop setting will carry over to this distributed one. 

\begin{thm} \label{thmD}
Let \Cref{assOL} be in force. Let $(w,m)$ and $(\hat w,\hat m)$ be admissible solutions to \eqref{dispde} and \eqref{dispdered}, respectively. Let $I$ be as in \eqref{asssat} for some $r \in \N$. There exist constants $\theta^*\in(0,1)$ and $\gamma^{(r)} \geq 0$ such that, if \eqref{cond_smallness} holds, then one has estimate~\eqref{mainform}. In particular, if \eqref{eq_Nhkbdd} holds, then $\theta^*$ can be chosen as in \Cref{thmOL} in order to have \eqref{eq_gammaexpd}.
\end{thm}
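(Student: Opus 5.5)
We mimic the proof of \Cref{thmOL}, replacing the Pontryagin decoupling by the Pontryagin system of each individual control problem in \eqref{dispde}. For fixed $i$, the pair $(w^i,m^{*,i})$ describes the optimal control of player $i$ against the frozen product environment $\prod_{j\sim i} m^{*,j}_t$. Set $p^i_t \defeq Dw^i(t,X^{*,i}_t)$, with $X^{*,i}$ driven by $B^i$. Differentiating the HJB equation in $x$ and applying It\^o's formula, $(X^{*,i},p^i)$ solves a forward–backward system:
\[
\di p^i_t = \Bigl(D_xH^i(X^{*,i}_t,p^i_t) - \bb E'\bigl[D_i f^i(t,X^{*,i}_t,\bs Y_t)\bigr]\Bigr)\di t + \di M^i_t, \qquad p^i_T = \bb E'\bigl[D_ig^i(X^{*,i}_T,\bs Y_T)\bigr],
\]
where $M^i$ is a martingale and $\bs Y$ is an independent copy with $Y^j\sim m^{*,j}$. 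The same holds for the reduced system with $\hat X^{*,i},\hat p^i$ and $\hat Y^j\sim \hat m^{*,j}$ (equal to $\call L(Z^j)$ outside $I$).

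\textbf{Coupling.} We drive $X^{*,i}$ and $\hat X^{*,i}$ with the same $B^i$ and take $X^{*,i}_0=\hat X^{*,i}_0$, so the difference is of bounded variation. For the environment integrals we couple $\bs Y,\hat{\bs Y}$ componentwise by optimal (or synchronous) couplings; since the integrand is a product measure, any coupling of the marginals may be used. We then compute $\frac{\di}{\di t}\bb E[(p^i_t-\hat p^i_t)\cdot(X^{*,i}_t-\hat X^{*,i}_t)]$ and integrate on $[0,T]$. Assumption~\ref{assOL}(a) gives a term $\geq\kappa^i\,\bb E|\beta^{*,i}-\hat\beta^{*,i}|^2$. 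Assumption~\ref{assOL}(c) applied pointwise and then in expectation gives
\[
K^i_\bullet\,\bb E|X^{*,i}-\hat X^{*,i}|^2 - \ell^i_\bullet \sum_{j\sim i}\bb E|Y^j-\hat Y^j|^2 .
\]
Since the drifts differ and the initial data coincide, Poincaré-type bounds give $\sup_t\bb E|X^{*,i}_t-\hat X^{*,i}_t|^2 \leq T\int_0^T \bb E|\beta^{*,i}-\hat\beta^{*,i}|^2$, which converts the $\kappa^i$ term into the $\kappa^i/(8T)$ and $\kappa^i/(8T^2)$ terms of \eqref{cond_smallness}. This yields, with $a_i \defeq \bb E|X^{*,i}_T-\hat X^{*,i}_T|^2+\int_0^T\bb E|X^{*,i}-\hat X^{*,i}|^2$ and $\bb E|Y^j-\hat Y^j|^2$ bounded by the Wasserstein quantity, the recursive inequality
\[
\Bigl(\tfrac{\kappa^i}{8T}+K^i\Bigr)a_i \leq \ell^i\sum_{j\sim i} a_j, \qquad i\in I .
\]
For $j\notin I$ (that is, $j\sim_r 0$ at the boundary), $a_j$ is replaced by $\sup_t(\frk m_2(m^{*,j}_t)+\frk m_2(Z^j_t))$ up to a factor $2T$. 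This is exactly the recursive inequality used for \Cref{thmOL}.

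\textbf{Conclusion.} Invoke \Cref{lem_recsum} (as in the proof of \Cref{thmOL}) with ratio $\theta$ to obtain \eqref{mainform} with the same $\gamma^{(r)}$. Under \eqref{eq_Nhkbdd} the same choice of $\theta^*$ gives \eqref{eq_gammaexpd}. Since $\call W_2(m^{*,0}_t,\hat m^{*,0}_t)^2\leq \bb E|X^{*,0}_t-\hat X^{*,0}_t|^2$, the estimate transfers to the laws.

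\textbf{Main obstacle.} The delicate step is justifying the stochastic representation of $p^i$ from an admissible solution of \eqref{dispde} with $D^2w^i$ bounded, and handling the cross term involving $\bb E'$. Two points need care. First, the couplings for the environments must be chosen independently of $(X^{*,i},\hat X^{*,i})$; the independence of players in distributed games is what makes this legitimate. Second, the monotonicity inequality must survive integration against a product coupling rather than a joint law. We will check this by applying Assumption~\ref{assOL}(c) at each point $(x,\bs y),(\hat x,\hat{\bs y})$ before taking expectations, using that all its terms are linear in the coupling.
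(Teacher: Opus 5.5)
Your proposal is correct and follows essentially the paper's proof: a per-player synchronous coupling (the paper's $\nu^i$, tested by $(Dw^i-D\hat w^i)\cdot(x-\hat x)$, which is exactly your It\^o computation of $\bb E[(p^i-\hat p^i)\cdot(X^{*,i}-\hat X^{*,i})]$); a product coupling of the environments (synchronous for $j\in I$, independent with $\call L(Z^j)$ for $j\notin I$); Assumptions (a)--(c) plus Cauchy--Schwarz on $X^{*,i}-\hat X^{*,i}=\int(\beta^{*,i}-\hat\beta^{*,i})$; and finally \Cref{lem_recsum}. The one correction is to keep the recursive inequality in its two-weight form, with $\kappa^i/(8T)+K_g^i$ on the terminal term and $\kappa^i/(8T^2)+K_f^i$ on the running term rather than a single weight $K^i$, because that is what yields exactly the ratios in \eqref{cond_smallness}.
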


\section{Examples} %\label{sec_ex}

Estimate~\eqref{mainform} (as well as \eqref{mainformvar}) along with the needed bound \eqref{cond_smallness} and the precise construction of $\gamma^{(r)}$ (and $\tilde\gamma^{(r)}$) in the proof of \Cref{lem_recsum} (and \Cref{lem_recsumvar}), explicitly quantifies the interplay between the structure of the data of the game and that of the graph that determines which other players each one (directly) interacts with. In particular, the form of the graph comes into play through the constants $N_k^h$ defined in \eqref{eq_Nhkbdd}, which determine the upper threshold \eqref{eq_boundthetaA} for $\theta^*$.

We propose here below some basic examples to highlight how the shape of the underlying graph intervenes in determining the constants in \Cref{thmOL}. We leave to the reader similar considerations concerning the additional constants appearing in \Cref{thmOLvar}.

\begin{example} \label{ex_ex1}
Consider the chain graph described by $\call N_i = \{i-1 \ \mathrm{mod}\, N,\, i+1 \ \mathrm{mod}\, N\}$, and let $g^i=0$ and
\[
f^i(t,\bs x) = \sum_{j \sim i} \phi(x^i-\mu x^j)
\]
for some convex function $\phi$ with $0 < \lambda I_d \leq D^2\phi \leq \Lambda I_d$ and $\mu \in (0,1)$. In this case, Assumption~\ref{assOL}(c) holds with
\[
K_f^i = \lambda, \qquad \ell_f^i = \frac{\mu\Lambda}2,
\]
and we have
\[
N_k^h = \begin{dcases}
1 & h \leq \Bigl\lfloor \frac{N}2 \Bigr\rfloor, \ k \in \bigl\{ \pm(h+1), \pm(h-1) \ \mathrm{mod}\, N \bigr\} \setminus \{0\}
\\
2 & (k,h) = (0,1)
\\
0 & \text{otherwise},
\end{dcases}
\]
so, according to \eqref{eq_defgammah},
\[
\gamma_1 = \frac{\theta}{1-2\theta^2}, \qquad \gamma_h = \frac{\theta}{1-\theta \gamma_{h-1}} \quad \text{for} \ \ 2 \leq h \leq \Bigl\lfloor \frac{N}2\Bigr\rfloor.
\]
Also, $\#\call N^{(r)}_0 = 2$, so it suffices that $\gamma^{(r)} \leq \eta^r$ for any $\eta \in (0,1)$ in order for \Cref{cor_red} to hold on large graphs. As $\tilde\gamma^{(r)} = \prod_{j \in \DSet r} \gamma_j$ according to \eqref{eq_defgamma(r)}, this amounts to asking for condition \eqref{cond_smallness} to hold with $\theta^* < \frac12$; therefore, if $\mu\Lambda \leq \lambda$ then our results hold on arbitrarily long horizons. Of course, costs of the same type can be considered on other graphs, and the needed relation between $\lambda$, $\mu$ and $\Lambda$ will depend on their structure.
\end{example}

Note that for any undirected graph $\Gamma$---that is, if $j \in \call N_i$ if and only if $i \in \call N_j$ for all $i,j\in\DSet N$---$N_k^h = 0$ whenever $k \sim_\ell 0$ for $\ell \notin \{h\pm1,h\}$, with in particular $N_0^h = n_0\1_{h=1}$.\footnote{With an abuse of notation, $\1_{h=1}$ denotes the function $\1_{\{1\}}(h)$; in other words, $\1_{h=1} = \delta_{h1}$, where $\delta_{h1}$ is the Kronecker delta.} This simplifies the recursion \eqref{eq_defgammah} that eventually determines $\gamma^{(r)}$. %In this case, the recursion \eqref{eq_defgammah} takes the form
%\[
%\Bigl( 1 - \theta \sup_{k \sim_h 0} N^h_k - \theta \sup_{k\sim_{h-1}0} N_k^h \gamma_{h-1} \Bigr) \gamma_{h} =  \theta \sup_{k \sim_{h+1} 0} N_k^h.
%\]
If in addition $\call N_i \cap \call N_j \subseteq \{0\}$ for any $i,j \in \call N_0^{(h)}$, $h \in \N$, we have 
\begin{equation} \label{eq_simpg}
0 \neq k \not\sim_{h\pm1} 0 \quad \implies \quad N_k^h = 0,
\end{equation}
or equivalently
\[
0 \neq k \sim_\ell 0 \quad \implies \quad N_k^h = 0 \quad \forall \, h \notin \{ \ell \pm 1\}.
\]
This in general happens if the distance from $0$ never stays constant while moving on the graph---that is, whenever $d_{\Gamma}(0,i) \neq d_{\Gamma}(0,j)$ for all $i \in \DSet N$ and $j \in \call N_i$ or, equivalently, whenever $\smash{\bigcup_{j \in \call N_0^{(h)}} \call N_j \cap \call N_0^{(h)} = \emptyset}$ for all $h \in \N$. Some explicit settings are presented in the next example.

\begin{example} \label{ex_simpg}
In \Cref{ex_ex1} we showed that property \eqref{eq_simpg} holds if $\Gamma$ is a chain. Other situations where that happens are if $\Gamma$ has a tree-like or a lattice structure. In the first case, there is a unique path of minimal length joining $0$ to each other vertex and we have $N_k^{h} \leq \1_{k \sim_{h\pm1} 0}$ for all $k \neq 0$. In the second case, supposing that the lattice is planar, up to choosing a bijection $\varsigma \colon \N \to \Z^2$ we can suppose that players are indexed by $i = (i^1,i^2) \in \Z^2$ and $\call N_i = \{ i \pm (1,0), i \pm (0,1) \}$, and the non-zero values of $N_k^h$ with $k \neq 0$ are
\begin{equation} \label{eq_Nlatt}
N_k^h = \begin{dcases}
1 & \text{if } k \sim_{h+1} 0 \text{ and } k^1k^2 = 0 \\
2 & \text{if } k \sim_{h\pm1} 0 \text{ and } k^1k^2 \neq 0 \\
3 & \text{if } k \sim_{h-1} 0 \text{ and } k^1k^2 = 0 .
\end{dcases}
\end{equation}
This can be generalised to higher-dimensional lattices as well. Also note that from \eqref{eq_Nlatt} we see that $\sum_{h \in \N} N_k^h = 4$, with $4 = n_i$ for all $i$---that is, \eqref{eq_trivbN} holds with equality, also recalling that $\check n_i = n_i$. This is in fact true for any undirected graph, for each $k$ which is connected to $0$---that is, if $k \sim_\ell 0$ for some $\ell \in \N$.
\end{example}

\begin{example} \label{ex_dirg}
Some directed graphs of interest can be obtained from those considered in \Cref{ex_simpg} by fixing an orientation for some (possibly all) edges $\{i,j\}$; according to our formalism, this amounts to taking an undirected graph $\Gamma$ determined by a choice of $\call N_i$ and replacing those neighbourhoods with $\tilde{\call N}_i \subseteq \call N_i$. If, for instance, one considers the planar lattice with
\begin{equation} \label{eq_dirlat1}
\tilde{\call N}_i = \{ i + \epsilon_i^1(1,0), i + \epsilon_i^2(0,1) \}, \qquad \epsilon_i^j =\begin{dcases}
\mathrm{sgn}(i^j) & i^j \neq 0 \\
\pm 1 & i^j = 0,
\end{dcases} 
\end{equation}
then the distance from $0$ is increasing along the graph and we have fewer non-zero $N_k^h$'s than in \eqref{eq_Nlatt}; more precisely, the only non-zero ones for $k\neq0$ are
\[
N_k^h = \begin{dcases}
1 & \text{if } k \sim_{h+1} 0 \text{ and } k^1k^2 = 0 \\
2 & \text{if } k \sim_{h+1} 0 \text{ and } k^1k^2 \neq 0. \\
\end{dcases}
\]
In particular, in this case \eqref{eq_defgammah} simply becomes
\[
\gamma_{h} =  \theta \sup_{k \sim_{h+1} 0} N_k^h = 2\theta \qquad \forall\, h \geq 1,
\]
so $\gamma^{(r)} = \frac12(2\theta)^r$, for any $\theta > 0$, thus producing the same threshold for $\theta^*$ as in \Cref{ex_ex1}.
On the other hand, if we modify 
\begin{equation} \label{eq_dirlat2}
\epsilon_{(-1,i^2)}^1 = - \epsilon_{(-1,i^2)}^2 = 1 \quad \text{when} \quad i^2 > 0, \qquad \text{but} \quad \epsilon_{(-1,h)}^1 = -1
\end{equation}
for some $h > 0$, then we can construct graphs where there are vertices arbitrarily distant from $0$ whose neighbourhood contains vertices (directly) connected to $0$; indeed, in this case $(-1,1) \in \call N_{(0,0)}^{(2h)}$ and $(-1,0) \in \call N_{(-1,1)} \cap \call N_{(0,0)}$. As a consequence, we also have $\smash{\sup_{k \sim 0} N^{2h}_k \geq 1}$, which produces a worse scenario than before when it comes to determining the threshold $\theta^*$ according to \eqref{eq_boundthetaA}. 
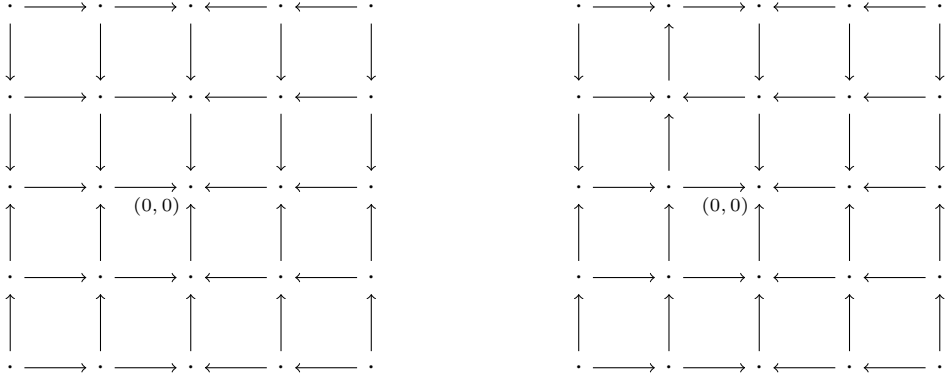
\begin{figure}[htb]
\centering
\begin{minipage}{.45\textwidth}
\centering
\begin{tikzpicture}[x=1.2cm,y=1.2cm]
% nodes
\foreach \i in {-2,-1,0,1,2} \foreach \j  in {-2,-1,0,1,2}
\node (\i\j) at (\i,\j)	 {$\cdot$};
\node[below left] at (0,0) {{\tiny$(0,0)$}};
% edges
\foreach \i in {-2,-1,0,1,2}
\draw[<-] (\i0) edge (\i-1) (\i0) edge (\i1);
\foreach \i in {-2,-1,0,1,2}
\draw[<-]  (0\i) edge (-1\i) (0\i) edge (1\i);
\foreach \i in {-2,-1,0,1,2}
\draw[<-] (\i1) -- (\i2);
\foreach \i in {-2,-1,0,1,2}
\draw[<-] (\i-1) -- (\i-2);
\foreach \i in {-2,-1,0,1,2}
\draw[<-] (1\i) -- (2\i);
\foreach \i in {-2,-1,0,1,2}
\draw[<-] (-1\i) -- (-2\i);
\end{tikzpicture}
\end{minipage}
\begin{minipage}{.45\textwidth}
\centering
\begin{tikzpicture}[x=1.2cm,y=1.2cm]
% nodes
\foreach \i in {-2,-1,0,1,2} \foreach \j  in {-2,-1,0,1,2}
\node (\i\j) at (\i,\j)	 {$\cdot$};
\node[below left] at (0,0) {{\tiny$(0,0)$}};
% edges
\foreach \i in {-2,0,1,2}
\draw[<-] (\i0) edge (\i-1) (\i0) edge (\i1);
\foreach \i in {-2,-1,0,2}
\draw[<-]  (0\i) edge (-1\i) (0\i) edge (1\i);
\foreach \i in {-2,0,1,2}
\draw[<-] (\i1) -- (\i2);
\foreach \i in {-2,-1,0,1,2}
\draw[<-] (\i-1) -- (\i-2);
\foreach \i in {-2,-1,0,1,2}
\draw[<-] (1\i) -- (2\i);
\foreach \i in {-2,-1,0,1,2}
\draw[<-] (-1\i) -- (-2\i);
\draw[->] (-10) edge (-11) (-11) edge (-12) (-1-1) edge (-10) (01) edge (-11) (11) edge (01);
\end{tikzpicture}
\end{minipage}
\caption{The two forms of directed lattice considered in \Cref{ex_dirg}---the former corresponds to the choice \eqref{eq_dirlat1} and the latter is obtained after the change \eqref{eq_dirlat2} with $h=2$.}
\end{figure}
\end{example}

\section{Proofs of the main results} \label{sec_proofs}

We collect in this section the proofs of \Cref{thmOL,thmOLvar,cor_Dvdec,thmD}. As it will be apparent, those of \Cref{thmOLvar,thmD} follows the same lines of that of \Cref{thmOL}: the former is a non-trivial variation, while the latter is a direct adaptation.

\begin{proof}[Proof of \Cref{thmOL}]
Let $\nu \in C^0([0,T];\Pc_2((\R^d)^N \times (\R^d)^I)$ solve
\begin{align} \label{synccouppde}
&\de_t \nu - \tfrac12 \sum_{j \in \DSet N} \tr(\Sigma^j D_{x^jx^j} \nu ) - \tfrac12 \sum_{k \in I} \tr(\Sigma^k D_{\hat x^k\hat x^k} \nu ) - \sum_{k \in I} \tr(\Sigma^k D_{x^k\hat x^k} \nu) 
\notag \\
&- \sum_{j \in \DSet N} \mathrm{div}_{x^j}\bigl(D_p H^j(x^j,v^j(t,\bs x))\nu \bigr) - \sum_{k \in I} \mathrm{div}_{\hat x^k}\bigl(D_p H^k(\hat x^k,\hat v^k(t,\bs{\hat x}))\nu \bigr) = 0
\end{align}
with $\nu(0)$ having as marginals (with respect to the factorisation $(\R^d)^N \times (\R^d)^I = (\R^d \times \R^d)^{I} \times (\R^d)^{\DSet N \setminus I}$) the measures $(\mathrm{id}_{\R^d},\mathrm{id}_{\R^d})_\sharp m_0^i$ for $i \in I$ and $m^i_0$ for $i \notin I$. Testing \eqref{synccouppde} by\footnote{This can be done as long as $v^i$ is an acceptable solution; cf., e.g., \cite{baldi}.}
\begin{equation*}% \label{eq_testsemi}
\bigl( v^i(t,\bs x) - \hat v^i(t,\bs{\hat x}) \bigr) \cdot (x^i - \hat x^i), \qquad i \in I \setminus \call N^{(r-1)}_0,
\end{equation*}
we have
\begin{equation} \label{difftested}
\begin{split}
&\int_0^T \int_{(\R^d)^N \times (\R^d)^I} \!\!\Bigl( - D_x H^i\Bigr\rvert^{(x^i,v^i(t,\bs x))}_{(\hat x^i,\hat v^i(t,\bs{\hat x}))} \cdot (x^i-\hat x^i) + D_p H^i \Bigr\rvert^{(x^i,v^i(t,\bs x))}_{(\hat x^i,\hat v^i(t,\bs{\hat x}))} \cdot \bigl(v^i(t,\bs x) - \hat v^i(t,\bs{\hat x})\bigr) \Bigr) \,\di \nu(t)\di t
\\
&=  - \int_{(\R^d)^{N} \times (\R^d)^{I}} \!\! D_i g^i\bigr|^{\bs x}_{\bs{\hat x}}\cdot (x^i - \hat x^i) \,\di \nu(T) - \int_0^T \int_{(\R^d)^{N} \times (\R^d)^{I}} \!\! D_i f^i(t,\cdot)\bigr|^{\bs x}_{\bs{\hat x}}\cdot (x^i - \hat x^i) \,\di \nu(t)\di t. 
%\\
%&\quad - \bs 1_{i > n} \int_0^T \int_{(\R^d)^{2N}} D_p H^i(0, v^i_{I_\ell}(t,\bs y)) \cdot \bigl(v^i(t,\bs x) - v^i_{I_\ell}(t,\bs y)\bigr) \Bigr) \,\di \nu(t)\di t,
 \end{split}
\end{equation}
%where, for $i > n$, $v^i_{I_\ell} = \hat v_{I_\ell}$ solving
%\begin{equation*}% \label{pspde}
%\begin{dcases}
%-\de_t \hat v_{I_\ell} - \sigma \Delta \hat v_{I_\ell} + D_xH^i(0,\hat v_{I_\ell}) + \sum_{1 \leq j \leq n} D_j \hat v_{I_\ell} D_pH^j(y^j, v^j_{I_\ell}) = D_i f^i\bs y \\[-3pt]
%v^i_{I_\ell}(T,\bs y) = D_ig^i\bs y.
%\end{dcases}
%\end{equation*}
Note that with an abuse of notation we wrote $D_i f^i$ and $D_i g^i$ to be computed both at $\bs x \in (\R^d)^N$ and $\bs{\hat x} \in (\R^d)^{I}$; this is motivated by the fact that, by Assumption~\ref{assOL}(b) and since $i \in I \setminus \call N^{(r-1)}_0$, they are independent of the coordinates $x^j$ with $j \notin I$. Using Assumptions~\eqref{assOL}(a)-(c) and \eqref{difftested} we obtain
\begin{equation} \label{esttest1}
\kappa^i \int_0^T\! \int_{(\R^d)^N \times (\R^d)^I}\!\! \Bigl\lvert D_p H^i\Bigr\rvert^{(x^i,v^i(t,\bs x))}_{(\hat x^i,\hat v^i(t,\bs{\hat x}))} \Bigr\rvert^2 \,\di \nu(t)\di t \leq -K_g^i W^i_T - K_f^i \int_0^T W^i + \ell_g^i  \sum_{j \sim i} W^j_T + \ell_f^i \sum_{j \sim i} \int_0^T W^j,
\end{equation}
where
\[
W_t^j \defeq \int_{\R^d \times \R^d} |x^j-\hat x^j|^2 \,\pi^j_\sharp \nu_t(\di x^j,\di \hat x^j), \qquad j \in I,
\]
$\pi^j$ being the canonical projection of $(\R^d \times \R^d)^{I}$ onto the copy of $\R^d \times \R^d$ indexed by $j$.
On the other hand, testing \eqref{synccouppde} by $|x^i-\hat x^i|^2$, we have
\begin{equation*} %\label{testxiyi}
W_t^i = - 2 \int_0^t \int_{(\R^d)^N \times (\R^d)^I} \!\! D_p H^i\Bigr\rvert^{(x^i,v^i(t,\bs x))}_{(\hat x^i,\hat v^i(t,\bs{\hat x}))} \cdot (x^i-\hat x^i) \,\di \nu(t)\di t
%\\
%&\quad - \bs 1_{i > n} \int_0^t \int_{(\R^d)^{2N}} D_p H^i(0, v^i_{I_\ell}(t,\bs y)) \cdot (x^i-y^i) \,\di \nu(t)\di t,
\end{equation*}
and by H\"older's inequality
\begin{equation} \label{esttest2}
\sup_{[0,T]} W^i \vee \fint_0^T W^i \leq 4T \int_0^T \int_{(\R^d)^N \times (\R^d)^I} \Bigl\lvert D_p H^i\Bigr\rvert^{(x^i,v^i(t,\bs x))}_{(\hat x^i,\hat v^i(t,\bs{\hat x}))} \Bigr\rvert^2 \,\di \nu(t)\di t.
\end{equation}
Combining \eqref{esttest1} and \eqref{esttest2} we deduce that
%\begin{equation} \label{estintd1}
%\bigl( \gamma + K_g T \bigr) W^i_T + K_f T^2 \fint_0^T W^i \leq \ell_g T \sum_{j \sim i} W^j_T + \ell_f T^2 \sum_{j \sim i} \fint_0^T W^j
%\end{equation}
%as well as
%\begin{equation} \label{estintd2}
%K_g T W^i_T + \bigl( \gamma + K_f T^2 \bigr) \fint_0^T W^i \leq  \ell_g T \sum_{j \sim i} W^j_T + \ell_f T^2 \sum_{j \sim i}  \fint_0^T W^j,
%\end{equation}
%so, summing \eqref{estintd1} and \eqref{estintd2},
\begin{equation} \label{estintd3}
\Bigl( \frac{\kappa^i}8 + K_g^i T \Bigr) W^i_T + \Bigl( \frac{\kappa^i}8 + K_f^i T^2 \Bigr) \fint_0^T W^i \leq  \ell_g^i T \sum_{j \sim i} W^j_T + \ell_f^i T^2 \sum_{j \sim i}  \fint_0^T W^j.
\end{equation}
Note now that the above estimates extends to $i \in \call N^{(r-1)}_0$ by letting
\[
W^j_t \defeq 2 \bigl( \frk m_2(m^{*,j}_t) + \frk m_2(Z^j_t) \bigr) \qquad \text{if} \quad j \in \call N^{(r)}_0;
\]
therefore, by \eqref{estintd3} and \Cref{lem_recsum}, there is $\theta^* \in (0,1)$ such that under condition \eqref{cond_smallness} we have
\[
\Bigl( \frac{\kappa^0}8 + K_g^0 T \Bigr) W^0_T + \Bigl( \frac{\kappa^0}8 + K_f^0 T^2 \Bigr) \fint_0^T W^0 \leq \gamma^{(r)} \sum_{j \sim_r 0} \Bigl( \Bigl( \frac{\kappa^j}8 + K_g^j T \Bigr) W^j_T + \Bigl( \frac{\kappa^j}8 + K_f^j T^2 \Bigr) \fint_0^T W^j \Bigr),
\]
from which \eqref{mainform} follows.
\end{proof}

\begin{proof}[Proof of \Cref{thmOLvar}]
Let $\nu$ be as in the proof of \Cref{thmOL}, but with $I = \DSet N$ and $\pi^k_\sharp \nu(0)$ being any coupling of $(m_0^k,\hat m_0^k)$. For any $i \neq k$, the argument used in the proof of \Cref{thmOL} gives \eqref{estintd3}, while if $i = k$ we have
\begin{align*}
&\kappa^k \int_0^T\! \int_{(\R^d)^N \times (\R^d)^N}\!\! \Bigl\lvert D_p H^k\Bigr\rvert^{(x^k,v^k(t,\bs x))}_{(\hat x^k,\hat v^k(t,\bs{\hat x}))} \Bigr\rvert^2 \,\di \nu(t)\di t 
\\[-3pt]
&\leq \norm{Dv^k}_\infty W^k_0 -K_g^k W^k_T - K_f^k \int_0^T W^k + \ell_g^k  \sum_{j \sim k} W^j_T + \ell_f^k \sum_{j \sim k} \int_0^T W^j
\end{align*}
as well as
\[
W_t^k = W_0^k - 2 \int_0^t \int_{(\R^d)^N \times (\R^d)^N} \!\! D_p H^k\Bigr\rvert^{(x^k,v^k(t,\bs x))}_{(\hat x^k,\hat v^k(t,\bs{\hat x}))} \cdot (x^k-\hat x^k) \,\di \nu(t)\di t,
\]
whence
\[
\sup_{[0,T]} W^k \vee \fint_0^T W^k \leq 2W^k_0 + 8T \int_0^T \int_{(\R^d)^N \times (\R^d)^N} \Bigl\lvert D_p H^i\Bigr\rvert^{(x^i,v^i(t,\bs x))}_{(\hat x^i,\hat v^i(t,\bs{\hat x}))} \Bigr\rvert^2 \,\di \nu(t)\di t
\]
and then
\begin{align*}% \label{eq_recestdx}
\Bigl( \frac{\kappa^k}{16} + K_g^k T \Bigr) W^k_T + \Bigl( \frac{\kappa^i}{16} + K_f^k T^2 \Bigr) \fint_0^T W^k 
\leq (2\kappa^k+T \norm{Dv^k}_\infty) W_0^k + \ell_g^k T \sum_{j \sim k} W^j_T + \ell_f^k T^2 \sum_{j \sim k}  \fint_0^T W^j.
\end{align*}
We conclude by invoking \Cref{lem_recsumvar}.
\end{proof}

\begin{proof}[Proof of \Cref{cor_Dvdec}]
For the sake of simplicity, we only give the proof for the case when $D_x H^0 = 0$; for the general one one proceeds similarly, also using the Lipschitz continuity of $D_xH^i$ and Gr\"{o}nwall's lemma. Let $X^{*,i}_t \sim m^{*,i}_t$ and $\hat X^{*,i}_t \sim \hat m^{*,i}_t$ be the solutions to \eqref{eq_stateop}--\eqref{eq_opconOL} with initial data $(\delta_{x^j})_{j \in \DSet N}$ and $(\delta_{\hat x^k},(\delta_{x^j})_{j \neq k})$, respectively. Let $\nu$ be as in the proof of \Cref{thmOLvar} (with $m_0^j = \delta_{x^j}, \hat m_0^k = \delta_{\hat x^k}$). Integrating $v^0(t,\bs y) - v^0(t,\bs{\hat y})$ against $\nu(\di \bs y,\di \bs{\hat y})\di t$ we obtain
\[ \begin{split}
v^0(0,(x^j)_{j\neq k},\cdot)\bigr|^{x^k}_{\hat x^k} &= \int_0^T \int_{(\R^d)^N \times (\R^d)^N} \!\! D_0f^0(t,\cdot)\bigr|^{\bs y}_{\bs{\hat y}}\, \nu_t(\di\bs y,\di\bs{\hat y})\di t + \int_{(\R^d)^N \times (\R^d)^N} \!\! D_0g^0\bigr|^{\bs y}_{\bs{\hat y}} \,\nu_T(\di\bs y,\di\bs{\hat y})
\\
&\leq  \sum_{j \in \call N_0 \cup \{0\}} \! \! \biggl( \norm{D^2f^0}_\infty \sqrt{T} \biggl( \,\int_0^T \call W_2(m^{*,j}_t,\hat m^{*,j}_t)^2\,\di t \biggr)^{\frac12} \! + \norm{D^2g^0}_\infty \call W_2(m^{*,j}_T,\hat m^{*,j}_T) \biggr);
\end{split}
\]
then \eqref{eq_vdecest} follows from \eqref{mainformvar}. Note that to have the estimate at general time $t_0$ it suffices to change the initial time from $t=0$ to $t=t_0$, and to have it for all $i \in \DSet N$ it suffices to consider player $i$ as the player ``of interest'' in place of player $0$.
\end{proof}

\begin{proof}[Proof of \Cref{thmD}]
For $i \in I$, let $\nu^i \in C^0([0,T];\Pc_2(\R^d \times \R^d)$ solve
\begin{align} \label{synccouppdeD}
\begin{dcases}
\de_t \nu^i - \tfrac12 \tr(\Sigma^i D^2_{x} \nu^i ) - \tfrac12 \tr(\Sigma^i D_{\hat x}^2 \nu^i ) - \tr(\Sigma^i D_{x\hat x} \nu^i) \\
\ {}- \mathrm{div}_{x}\bigl(D_p H^i(x,Dw^i(t,x))\nu^i \bigr) - \mathrm{div}_{\hat x}\bigl(D_p H^i(\hat x,D\hat w^i(t,{\hat x}))\nu^i \bigr) = 0
\\
\nu^i|_{t=0} = (\mathrm{id}_{\R^d},\mathrm{id}_{\R^d})_\sharp m_0^i.
\end{dcases}
\end{align}
Testing \eqref{synccouppdeD} by $\bigl( Dw^i(t, x) - D\hat w^i(t,{\hat x}) \bigr) \cdot (x - \hat x)$, we have
\begin{equation} \label{difftestedD}
\begin{split}
&\int_0^T \int_{\R^d \times \R^d} \!\!\Bigl( - D_x H^i\Bigr\rvert^{(x,Dw^i(t, x))}_{(\hat x, D\hat w^i(t,{\hat x}))} \cdot (x-\hat x) + D_p H^i \Bigr\rvert^{(x,Dw^i(t, x))}_{(\hat x,D \hat w^i(t,{\hat x}))} \cdot \bigl(Dw^i(t, x) - D\hat w^i(t,{\hat x})\bigr) \Bigr) \,\di \nu^i_t \di t
\\
&=  - \int_{(\R^d)^{N} \times (\R^d)^{N}} \!\! D_i g^i\bigr|^{(x,\bs y)}_{({\hat x},\bs{\hat y}) }\cdot (x - \hat x) \, \nu^i_T(\di x,\di \hat x)\, \varpi^i_T(\di \bs y,\di \bs{\hat y}) \\
&\quad - \int_0^T  \int_{(\R^d)^{N} \times (\R^d)^{N}} \!\! D_i f^i(t,\cdot) \bigr|^{(x,\bs y)}_{({\hat x},\bs{\hat y}) }\cdot (x - \hat x) \, \nu^i_t(\di x,\di \hat x)\, \varpi^i_t(\di \bs y,\di \bs{\hat y})\,\di t,
 \end{split}
\end{equation}
where $\varpi^i_t$ is any coupling of $\prod_{j \sim i} m^{*,j}_t$ and $\prod_{j \sim i} \hat m^{*,j}_t$. In particular, we can choose
\[
\varpi^i_t = \prod_{j \sim i:\, j \in I} \nu^j_t \otimes \prod_{j \sim i:\, j \notin I} \bigl( m^{*,j}_t \otimes \call L(Z^j_t) \bigr).
\]
Using Assumptions~\eqref{assOL}(a)-(c) and \eqref{difftestedD} we obtain
\begin{equation*}% \label{esttest1D}
\kappa^i \int_0^T\! \int_{\R^d \times \R^d}\!\! \Bigl\lvert D_p H^i\Bigr\rvert^{(x,Dw^i(t, x))}_{(\hat x,D \hat w^i(t,{\hat x}))} \Bigr\rvert^2 \,\di \nu_t^i \di t \leq -K_g^i W^i_T - K_f^i \int_0^T W^i + \ell_g^i  \sum_{j \sim i} W^j_T + \ell_f^i \sum_{j \sim i} \int_0^T W^j,
\end{equation*}
where
\[
W^j \defeq \begin{cases}
\int_{\R^d \times \R^d} |x-\hat x|^2 \,\nu^j(\di x,\di \hat x) & j \in I \\
2 \bigl( \frk m_2(m^{*,j}) + \frk m_2(Z^j) \bigr) & j \in \bigcup_{i \in I} \call N_i \setminus I.
\end{cases}
\]
On the other hand, testing \eqref{synccouppdeD} by $|x-\hat x|^2$ we have
\begin{equation*} %\label{testxiyiD}
W_t^i = - 2 \int_0^t \int_{\R^d \times \R^d} \!\! D_p H^i\Bigr\rvert^{(x,Dw^i(t x))}_{(\hat x, D \hat w^i(t,{\hat x}))} \cdot (x-\hat x) \,\di \nu^i_t\di t
%\\
%&\quad - \bs 1_{i > n} \int_0^t \int_{(\R^d)^{2N}} D_p H^i(0, v^i_{I_\ell}(t,\bs y)) \cdot (x^i-y^i) \,\di \nu(t)\di t,
\end{equation*}
and by H\"older's inequality
\begin{equation*}% \label{esttest2D}
\sup_{[0,T]} W^i \vee \fint_0^T W^i \leq 4T \int_0^T \int_{\R^d \times \R^d} \Bigl\lvert D_p H^i\Bigr\rvert^{(x,Dw^i(t, x))}_{(\hat x,D \hat w^i(t,{\hat x}))} \Bigr\rvert^2 \,\di \nu_t^i\di t.
\end{equation*}
Then the proof proceeds as that of \Cref{thmOL}.
\end{proof}

\section{Two decay estimates for recursive inequalities on graphs} \label{sec_tl}

This last section contains the two technical results used in the proofs above in order to transform the recursive inequalities found into the final estimates \eqref{mainform} and \eqref{mainformvar}, respectively. They are general inequalities for sequences indexed by the vertices of a given graph of $N$ vertices.

\begin{lem} \label{lem_recsum}
Let $A = (A^i)_{i \in \N},B = (B^i)_{i \in \N}$ be non-negative sequences and let $r \in \N$. Suppose there are positive constants $\alpha_i,\beta_i,\alpha'_i,\beta'_i$ such that
\[
\alpha_i A^i+ \beta_i B^i \leq \alpha'_i \sum_{j \sim i} A^j + \beta'_i \sum_{j \sim i} B^j \qquad \forall \, i \sim_k 0,\ k \in \DSet r.
\]
There are $\theta^*\in(0,1)$ and $\gamma^{(r)} \geq 0$ such that, if
\[
\gamma \defeq \sup_{i \sim_k 0,\, k \in \DSet r} \Bigl( \, \frac{\alpha'_i}{\displaystyle\inf_{j \sim i} \alpha_j} \vee \frac{\beta'_i}{\displaystyle\inf_{j \sim i} \beta_j} \Bigr) \leq \theta^*,
\]
then
\begin{equation} \label{eq_lemrec}
\alpha_0 A^0 + \beta_0 B^0 \leq \gamma^{(r)} \sum_{i \sim_r 0} \bigl( \alpha_i A^i + \beta_i B^i \bigr),
\end{equation}
In particular, if there is $\frk n \geq 1$ such that \eqref{eq_Nhkbdd} holds,
then for each $\bar\gamma \in (0,1)$ one can choose $\theta^*$ depending only on $\frk n$ and $\bar\gamma$, and such that \eqref{eq_gammaexpd} holds.%More explicitly, $\gamma^{(r)}$ is defined as in \eqref{eq_boundgammar}, where the $\gamma_h$'s are given by \eqref{eq_boundgamma} with $\theta = \gamma$, and $\theta^*$ satisfies \eqref{eq_boundtheta}. 
\end{lem}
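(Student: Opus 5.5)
The plan is to work with the weighted quantities $E^i \defeq \alpha_i A^i + \beta_i B^i$ and to organise them by shells around $0$. Setting $S_h \defeq \sum_{k \sim_h 0} E^k$ for $h \in \DSet{r+1}$, the goal becomes the bound $E^0 = S_0 \leq \gamma^{(r)} S_r$.

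The first step is to rewrite the hypothesis in terms of the $E$'s. For $j \sim i$ we have $\alpha'_i A^j \leq \gamma\, \alpha_j A^j$ and $\beta'_i B^j \leq \gamma\, \beta_j B^j$, by the very definition of $\gamma$. Hence
\[
E^i \leq \gamma \sum_{j \sim i} E^j \qquad \text{for every } i \sim_h 0,\ h \in \DSet r .
\]
Next I sum this over $i \in \call N_0^{(h)}$. A neighbour $j$ of such an $i$ lies in some shell $\call N_0^{(\ell)}$ with $\ell \leq h+1$; the upper bound holds because $j \sim i$ and $i$ is at distance $h$. Moreover, $j$ is counted exactly $N^h_j$ times, with $N^h_j$ as in \eqref{eq_Nhkbdd}. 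This gives the shell recursion
\[
S_h \leq \gamma \sum_{\ell=0}^{h+1} \Bigl(\sup_{k \sim_\ell 0} N^h_k\Bigr) S_\ell, \qquad h \in \DSet r .
\]
Because the graph may be directed, contributions from all inner shells $\ell \leq h$ are allowed, and not only from $\ell = h-1$.

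The main step, and the one I expect to be the real obstacle, is to show by induction on $h$ that
\[
S_h \leq \gamma_h S_{h+1} \qquad \text{for } h \in \DSet r,
\]
with explicit coefficients $\gamma_h$. The base case $h=0$ is immediate: $S_0 \leq \gamma\, n_0 S_1$ up to the term $\gamma N^0_0 S_0$, which is absorbed into the left-hand side. For the inductive step, assume $S_\ell \leq \gamma_\ell S_{\ell+1}$ for all $\ell < h$. Iterating gives $S_\ell \leq \bigl(\prod_{j=\ell}^{h-1}\gamma_j\bigr) S_h$. Substituting this into the shell recursion and absorbing all terms involving $S_\ell$, $\ell \leq h$, into the left-hand side yields
\[
\Bigl(1 - \gamma \sum_{\ell=0}^{h} \sup_{k\sim_\ell 0} N^h_k \prod_{j=\ell}^{h-1}\gamma_j\Bigr) S_h \leq \gamma \sup_{k \sim_{h+1} 0} N^h_k \, S_{h+1}.
\]
I define $\gamma_h$ as the ratio of the two coefficients in this inequality. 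This is legitimate provided the bracket stays positive, and that positivity is exactly what determines the threshold $\theta^*$. Finally I set $\gamma^{(r)} \defeq \prod_{h \in \DSet r}\gamma_h$, and chaining the bounds gives \eqref{eq_lemrec}. In general (without \eqref{eq_Nhkbdd}) the constants $\sup N^h_k$ are finite for the finitely many shells involved, so some $\theta^*>0$ works for the given $r$.

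For the last claim, assume \eqref{eq_Nhkbdd} with constant $\frk n$. I prove by induction that $\gamma_h \leq \bar\gamma$ once $\gamma \leq \theta^*$ with $\theta^*$ small in terms of $\frk n$ and $\bar\gamma$. If $\gamma_j \leq \bar\gamma < 1$ for all $j < h$, the sum inside the bracket is at most $\frk n \sum_{\ell \leq h} \bar\gamma^{h-\ell} \leq \frk n/(1-\bar\gamma)$. The bracket is then at least $1 - \theta^*\frk n/(1-\bar\gamma)$, and therefore
\[
\gamma_h \leq \frac{\theta^* \frk n}{1 - \theta^* \frk n/(1-\bar\gamma)}.
\]
Choosing $\theta^*$ so that the right-hand side is at most $\bar\gamma$ closes the induction, uniformly in $h$, and gives $\gamma^{(r)} \leq \bar\gamma^r$, which is \eqref{eq_gammaexpd}. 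The delicate point to check is the bookkeeping of multiplicities $N^h_k$ when neighbours lie in inner or same shells, and the counting convention for $h=0$, where $0$ itself is excluded from its own neighbour set. This is what yields the product structure without a loss that grows with $r$.
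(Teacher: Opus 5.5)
Your proposal is correct and follows the paper's proof essentially step by step: shell sums, the multiplicity bound through $N^h_k$, inductive absorption of the inner- and same-shell terms to define $\gamma_h$ as a ratio, and $\gamma^{(r)}=\prod_{h\in\DSet r}\gamma_h$. Your direct induction $\gamma_h\le\bar\gamma$, which closes with e.g.\ $\theta^*=\bar\gamma(1-\bar\gamma)/\frk n$, is a slightly cleaner version of the paper's intermediate bound $\gamma_h\le K\gamma(1-K\gamma)^{-1}$. The only slip is in the base case: since $N^0_k=\1_{k\in\call N_0}$, one has exactly $S_0\le\gamma S_1$, i.e.\ $\gamma_0=\gamma$, which is what your general ratio formula gives anyway. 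A factor $n_0$ there would not be controlled by \eqref{eq_Nhkbdd} and would spoil $\gamma^{(r)}\le\bar\gamma^r$.
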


\begin{proof}
Write $C^i \defeq \alpha^i A^i + \beta^i B^i$ and note that
\begin{equation} \label{recineqC}
C^i\leq \gamma \sum_{j \sim i} C^j \qquad \forall\, i \sim_k 0, \ k \in \DSet r.
\end{equation}
We prove by induction that for each $h\geq 0$ there is $\gamma_{h} > 0$ such that
\begin{equation} \label{recineqansatz}
\sum_{i \sim_{h} 0} C^i \leq \gamma_{h} \sum_{i \sim_{h+1} 0} C^i ,
\end{equation}
which we know is true for $h=0$. Summing \eqref{recineqC} over $i \sim_h 0$ to get, for $h \in \DSet r \setminus \{0\}$,
\begin{equation} \label{eq_defgammah} \begin{split}
\sum_{i \sim_h 0} C^i \leq \gamma \sum_{\ell=0}^{h+1} \sum_{j \sim_\ell 0} N_j^h C^j \leq \gamma \sum_{j \sim_{h+1} 0} N_{j}^h C^j + \gamma \sum_{\ell = 0}^{h} \sup_{k \sim_\ell 0} N_k^h \prod_{j=\ell}^{h-1} \gamma_{j} \sum_{i \sim_h 0} C^i;
\end{split}
\end{equation}
then it suffices to let $\gamma_0 \defeq \gamma$ and
\[
\gamma_{h} \defeq \frac{\displaystyle \gamma \sup_{k \sim_{h+1} 0} N_k^h}{\displaystyle 1 - \gamma \sum_{i = 0}^{h} \sup_{k \sim_i 0} N_k^h \prod_{j=i}^{h-1} \gamma_{j}} \qquad \text{for} \quad h \in \DSet r \setminus \{0\}
\]
to conclude that \eqref{recineqansatz} in fact holds, as long as $\gamma_h > 0$. This condition in turn holds provided that $\gamma \leq \theta^*$ with $\theta^*$ small enough---depending on $\gamma_j$ only for $j<h$; in particular, 
\begin{equation} \label{eq_boundthetaA}
\theta^* < \inf_{h \in \DSet{r}} \Bigl(\,  \sum_{i = 0}^{h} \sup_{k \sim_i 0} N_k^h \prod_{j=i}^{h-1} \gamma_{j} \Bigr)^{-1},
\end{equation}
we can iterate estimate~\eqref{recineqansatz} to obtain \eqref{eq_lemrec} with
\begin{equation} \label{eq_defgamma(r)}
\gamma^{(r)} \defeq \prod_{j \in \DSet r} \gamma_j.
\end{equation}
Note now that if there exists $\frk n \geq 1$ such that $N^h_k \leq \frk n$ for all $h,k \in \N$, then for any $K > \frk n$ and $\gamma$ small enough (only depending on $K$) we have $\gamma_h \leq K\gamma(1-K\gamma)^{-1}$ for all $h \geq 1$. Indeed, $\gamma_1 \leq {\gamma\frk n}/({1-\gamma\frk n - \gamma^2})$ and by induction
\[
\gamma_{h} \leq \frac{\gamma \frk n}{1- \gamma \frk n \sum_{i=0}^h \gamma^{h-i}(1-K\gamma)^{i-h} } \leq \frac{\gamma\frk n}{1-\gamma \frk n \frac{1-K\gamma}{1-2K\gamma}} = \frac{\frk n(1-3K\gamma + 2K^2\gamma^2)}{1-2K\gamma - \gamma \frk n + K\gamma^2 \frk n}\, \gamma (1-K\gamma)^{-1},
\]
where $\frac{\frk n(1-3K\gamma + 2K^2\gamma^2)}{1-2K\gamma - \gamma \frk n + K\gamma^2 \frk n} \leq K$ if $\gamma$ is sufficiently small with respect to $K$. In particular, if $\gamma < (2K)^{-1}$ then $\gamma_h < 1$ uniformly for all $h$; the last part of the thesis follows with $\gamma = \bar\gamma/(K(1+\bar\gamma))$, for example with $K = 2\frk n$.
\end{proof} 

\begin{lem} \label{lem_recsumvar}
Let $A = (A^i)_{i \in \N},B = (B^i)_{i \in \N}$ be non-negative sequences. Let $k \in \DSet N$ such that $k \sim_r 0$.\footnote{That is, $k$ is connected to $0$ and $r \in \N$ is the unique number such that $k \in \call N^{(r)}_0$.} Suppose that there are positive constants $\alpha_i,\beta_i,\alpha'_i,\beta'_i,E^k$ such that 
\[
\alpha_i A^i+ \beta_i B^i \leq E^k \1_{i=k} + \alpha'_i \sum_{j \sim i} A^j + \beta'_i \sum_{j \sim i} B^j \qquad \forall\, i \in \DSet N
\]
There are constants $\theta^*\in (0,1)$ and $\tilde\gamma^{(r)}>0$ such that, if
\[
\gamma \defeq \sup_{i \in \DSet N} \Bigl( \, \frac{\alpha'_i}{\displaystyle\inf_{j \sim i} \alpha_j} \vee \frac{\beta'_i}{\displaystyle\inf_{j \sim i} \beta_j} \Bigr) \leq \theta^*,
\]
then
\begin{equation} \label{eq_lemrecvar}
\alpha_0 A^0 + \beta_0 B^0 \leq \tilde\gamma^{(r)} E^k.
\end{equation}
In particular, if there is $\frk n \geq 1$ such that \eqref{eq_Nhkbdd} holds, then for each $\bar\gamma \in (0,1)$ one can choose $\theta^*$ depending only on $\frk n$ and $\bar\gamma$, and such that \eqref{eq_lemrecvarbound} holds, with implied constant depending only on $\frk n$.
\end{lem}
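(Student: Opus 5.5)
We prove \Cref{lem_recsumvar} by the same layered-sum recursion as in \Cref{lem_recsum}, but anchored at the source node $k$ rather than at $0$. As there, we set $C^i \defeq \alpha_i A^i + \beta_i B^i$. The hypothesis then gives
\[
C^i \leq E^k\1_{i=k} + \gamma \sum_{j\sim i} C^j \qquad \forall\, i\in\DSet N.
\]
We group nodes into layers $S_h \defeq \sum_{i\sim_h 0} C^i$ for $h\geq 0$, so that $S_r$ contains the source term.

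\textbf{Far layers.} We first show that the layers beyond $r$ are controlled by the nearer ones. For $h>r$ there is no source, so summing the inequality over $i\sim_h 0$ gives
\[
S_h \leq \gamma\sum_\ell \sum_{j\sim_\ell 0} N^h_j C^j.
\]
We then run a "backward" recursion from the outermost layer $h^*$, where $h^*=\max d_\Gamma(0,\cdot)$ is finite since $\DSet N$ is finite. The goal is $S_{h+1}\leq \delta_h S_h$ for all $h\geq r$, with $\delta_h\lesssim \gamma\frk n$. The argument is by downward induction on $h$, mirroring the proof of \Cref{lem_recsum} with the roles of inner and outer layers exchanged: the outer layers are absorbed using the already-established ratios $\delta_{h'}$ for $h'>h$, and $\gamma$ must be small for the denominators to stay positive.

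\textbf{Inner layers.} For $0\leq h<r$ the inequality is exactly \eqref{recineqC}, so the forward recursion of \Cref{lem_recsum} yields $S_h\leq \gamma_h S_{h+1}$.

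\textbf{Source layer.} At $h=r$ we sum over $i\sim_r 0$ and get
\[
S_r \leq E^k + \gamma\Bigl(\sum_{\ell\leq r} (\cdots) + \sup N\, S_{r+1}\Bigr).
\]
We bound the terms with $\ell\leq r$ by $S_r$ via the products of the $\gamma_j$'s, and bound $S_{r+1}$ by $\delta_r S_r$. Absorbing everything into the left side gives $S_r\leq (1-c\gamma)^{-1}E^k$ for some constant $c$ depending on $\frk n$.

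\textbf{Conclusion.} Chaining the inner-layer ratios gives
\[
\alpha_0A^0+\beta_0B^0 = S_0 \leq \prod_{j\in\DSet r}\gamma_j\,(1-c\gamma)^{-1} E^k \eqqcolon \tilde\gamma^{(r)}E^k .
\]
Under \eqref{eq_Nhkbdd}, the bound $\gamma_j\leq K\gamma(1-K\gamma)^{-1}$ established in \Cref{lem_recsum} gives $\tilde\gamma^{(r)}\lesssim\bar\gamma^r$ for $\theta^*$ chosen as there, with implied constant $(1-c\gamma)^{-1}\leq 2$ depending only on $\frk n$.

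\textbf{Main obstacle.} The difficulty is the coupling between the two recursions. The inner ratios $\gamma_h$ and the outer ratios $\delta_h$ each need bounds on the other side, because a layer $h$ can receive contributions from layers both inside and outside it (for example, in directed graphs as in \Cref{ex_dirg}, contributions from arbitrary $\ell$ through $N_j^h$). My plan is to handle this by a joint bootstrap. The ansatz is $S_{h}\leq \rho S_{h+1}$ for $h<r$ and $S_{h+1}\leq\rho S_h$ for $h\geq r$, with $\rho= K\gamma/(1-K\gamma)$. Under it, every layer is bounded by $\rho^{|h-h'|}S_{h'}$ with respect to any other layer $h'$, so each cross term contributes a geometric series $\sum_m \frk n\rho^m\lesssim \frk n$. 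This closes the induction when $\gamma$ is small relative to $\frk n$, as in the final computation of \Cref{lem_recsum}. It is also exactly where $h^*<\infty$ is needed, since the downward induction must start at the outermost layer (cf.\ \Cref{rmk_extreclemvar}).
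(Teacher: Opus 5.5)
Your inner-layer forward recursion and the source-layer absorption are fine. The gap is in the far-layer step, and it occurs for directed graphs, which the lemma covers. For $m>r+1$, the summed inequality for $S_m$ contains $\gamma\sup_{k\sim_\ell 0}N^m_k\,S_\ell$ for \emph{every} $\ell\leq m+1$. The downward induction absorbs $\ell=m$ and $\ell=m+1$, but to reach $S_m\leq\delta_{m-1}S_{m-1}$ you must also bound $S_\ell$, $\ell<m-1$, by $S_{m-1}$.

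Your bootstrap does not give this. The ansatz only dominates a layer by a layer lying between it and $r$, so the claim that every layer is bounded by $\rho^{|h-h'|}S_{h'}$ for \emph{any} $h'$ is false. The offending $S_\ell$ are exactly those lying between $r$ and $m-1$ or on the other side of $r$.

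In fact the ratio bound itself is false. Take the chain $\call N_i=\{i+1\}$ plus one extra edge $r\in\call N_m$ with $m>r+1$, so that the layers are $\{h\}$ and \eqref{eq_Nhkbdd} holds with $\frk n=1$. Set $k=r$, $\alpha_i=\beta_i=1$, $\alpha'_i=\beta'_i=\gamma$, $B\equiv 0$, and $A^r=E^k$, $A^m=\gamma E^k$, $A^i=0$ otherwise. Every hypothesis holds, yet $S_m=\gamma E^k>0=S_{m-1}$ for every $\gamma$, so outer layers need not decay away from $r$ at all. The intermediate claims of your downward induction therefore cannot be proved, even though the single ratio you actually use, $S_{r+1}\lesssim\gamma\frk n\,S_r$, is true. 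Your argument is correct for undirected graphs, where $N^h_k=0$ unless $k\sim_\ell 0$ with $|\ell-h|\leq 1$.

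The paper never bounds outer layers by inner ones. It proves $S_h\leq\tilde\gamma_h E^k\1_{h\geq r}+\gamma_h S_{h+1}$ for \emph{every} $h$. This works because the inequality at layer $h$ only involves layers $\leq h+1$, so all lower layers, including the source layer and those beyond it, can be rewritten through $S_h$ and $E^k$ using inequalities already established. Iterating up to the first empty layer $h^*$ gives $\tilde\gamma^{(r)}=\sum_{j=r}^{h^*}\gamma^{(j)}\tilde\gamma_j$.

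If you prefer your architecture, strengthen the downward hypothesis to $S_m\leq\sum_{\ell<m}\delta_{m,\ell}S_\ell$, so that an outer layer is controlled by all inner layers at once. Substituting this at $m+1$ into the inequality for $S_m$ and absorbing the $S_m$ terms gives $\delta_{m,\ell}\leq\bar\delta$ uniformly, provided $\gamma\frk n\leq\bar\delta/(1+\bar\delta)^2$. At $m=r+1$, the inner forward ratios (each at most $\rho<1$) then give $S_{r+1}\leq\bar\delta(1-\rho)^{-1}S_r$. From there your source-layer absorption and conclusion go through, with $\tilde\gamma^{(r)}=\gamma^{(r)}(1-c\gamma\frk n)^{-1}$.
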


%The sum in \eqref{eq_lemrecvar} is actually finite since in the proof we will be able to let $\tilde\gamma^{(j)} = 0$ if $j > h^* \defeq \min\{h \in \N:\ \call N^{(h)}_0 = \emptyset\}$. Note that $h^*$ is finite as so is $N$; nevertheless, the uniform bound \eqref{eq_lemrecvarbound} suggests that there are regimes where \Cref{lem_recsumvar} can be extended to graphs with countably many vertices. This will be further discussed after the proof---see \Cref{rmk_extreclemvar}.

\begin{proof}%[Proof of \Cref{lem_recsumvar}]
Write $C^i \defeq \alpha_i A^i + \beta_i B^i$ and note that
\begin{equation} \label{recineqCvar}
C^i\leq  E^k \delta_{ik} + \gamma \sum_{j \sim i} C^j \qquad \forall\,i \in \DSet N.
\end{equation}
We prove by induction that for each $h\geq 0$ there are $\tilde\gamma_h,\gamma_{h} > 0$ such that
\begin{equation} \label{recineqansatzvar}
\sum_{i \sim_{h} 0} C^i \leq \tilde\gamma_h E^k \1_{h \geq r} + \gamma_{h} \sum_{i \sim_{h+1} 0} C^i ,
\end{equation}
which we know is true for $h < r$ by \Cref{lem_recsum}. Summing \eqref{recineqCvar} over $i \sim_r 0$ we get 
\[
\sum_{i \sim_r 0} C^i \leq E^k + \gamma \sum_{j \sim_{r+1} 0} N_{j}^r C^j + \gamma \sum_{\ell = 0}^{r} \sup_{k \sim_\ell 0} N_k^r \prod_{j=\ell}^{r-1} \gamma_{j} \sum_{i \sim_r 0} C^i,
\]
so \eqref{recineqansatzvar} holds for $h=r$ by defining
\begin{equation} \label{eq_deftgammar}
\tilde \gamma_r \defeq \frac{\gamma_r}{\gamma \sup_{j \sim_{r+1}0} N^r_j}.
\end{equation}
Let now $h>r$ and assume that \eqref{recineqansatzvar} holds for any $h' \in \{0,\dots,h-1\}$ in place of $h$. It is easy to prove by induction that
\[
\sum_{i \sim_{\ell} 0} C^i \leq \sum_{i=\ell}^{h-1} \tilde\gamma_i \prod_{j=\ell}^{i-1} \gamma_j E^k + \prod_{j=\ell}^{h-1} \gamma_j \sum_{i \sim_h 0} C^i \qquad \forall\, \ell \in \{r,\dots,h-1\};
\]
therefore, summing \eqref{recineqCvar} over $i \sim_h 0$ we have
\[ \begin{split}
\sum_{i \sim_h 0} C^i &\leq \gamma \sum_{\ell=0}^{h+1} \sum_{j \sim_\ell 0} N^h_j C^j 
\\
&\leq \gamma \sum_{j\sim_{h+1} 0} N^h_j C^j + \gamma \sum_{\ell = 0}^r \sup_{k \sim_\ell 0} N^h_k \prod_{j=\ell}^{r-1} \gamma_j \sum_{i \sim_r 0} C^i + \gamma \sum_{\ell=r+1}^{h} \sup_{k \sim_\ell 0} N^h_k \sum_{i \sim_\ell 0} C^i
%\\
%&\leq \gamma \sum_{j \sim_{h+1} 0} N_{j}^h C^j + \gamma \Bigl( \sum_{\ell = 0}^{r} \sup_{k \sim_\ell 0} N^h_k \sum_{i=r}^{h-1} \tilde\gamma_i \prod_{j=\ell}^{i-1} \gamma_{j} + \sum_{\ell = r+1}^{h-1} \sup_{k \sim_\ell 0} N^h_k \sum_{i=\ell}^{h-1} \tilde\gamma_i \prod_{j=\ell}^{i-1} \gamma_{j} \Bigr) E^k + \gamma \sum_{\ell = 0}^{h} \sup_{k \sim_\ell 0} N_k^h \prod_{j=\ell}^{h-1} \gamma_{j} \sum_{i \sim_h 0} C^i
\\
&\leq \gamma \sum_{j \sim_{h+1} 0} N_{j}^h C^j + \gamma \sum_{\ell = 0}^{h-1}  \sum_{i = r \vee \ell}^{h-1} \sup_{k \sim_\ell 0} N_k^h \tilde\gamma_i \prod_{j=\ell}^{i-1} \gamma_j E^k + \gamma \sum_{\ell = 0}^{h} \sup_{k \sim_\ell 0} N_k^h \prod_{j=\ell}^{h-1} \gamma_{j} \sum_{i \sim_h 0} C^i.
\end{split}
\]
This proves that \eqref{recineqansatzvar} holds if $\theta^*$ is small enough and we recursively define
\begin{equation} \label{eq_deftgammar+}
\tilde\gamma_j \defeq \frac{\displaystyle \gamma \sum_{\ell = 0}^{j-1}  \sum_{i = r \vee \ell}^{j-1} \sup_{k \sim_\ell 0} N_k^j \tilde\gamma_i \prod_{h=\ell}^{i-1} \gamma_h}{\displaystyle 1 - \gamma \sum_{i = 0}^{j} \sup_{k \sim_i 0} N_k^h \prod_{h=i}^{j-1} \gamma_{h}} \qquad \text{for} \quad j > r.
\end{equation}
It suffices now to iterate \eqref{recineqansatzvar} for $h=0,\dots,h^* \defeq \min\{h \in \N:\ \call N^{(h)}_0 = \emptyset\}$ to obtain \eqref{eq_lemrecvar} with $\tilde\gamma^{(r)} = \sum_{j = r}^{h^*} \gamma^{(j)} \tilde\gamma_j$ (with $\gamma^{(j)}$ defined as in \eqref{eq_defgamma(r)}). Arguing now as in the proof of \Cref{lem_recsum} we can choose $\theta^* = \theta^*(\bar\gamma,\frk n)$ such that, if $\gamma \leq \theta^*$, then
\begin{equation} \label{eq_provedlemprec}
\sup_{h \in \N} \frac{\gamma \frk n}{1- \gamma \frk n \sum_{i=0}^h \gamma^{h-i}(1-K\gamma)^{i-h} } \leq \bar\gamma,
\end{equation}
which in particular implies that
\begin{equation} \label{eq_boundgammaj}
\gamma^{(j)} \leq \bar\gamma^j \qquad \forall\, j \geq 1.
\end{equation}
Then by \eqref{eq_deftgammar+} and \eqref{eq_provedlemprec} we obtain that
\begin{equation} \label{eq_deftgammar+'}
\tilde\gamma_j \leq \bar\gamma \sum_{\ell=0}^{j-1} \sum_{i=r \vee \ell}^{j-1} \tilde \gamma_i \prod_{h=\ell}^{i-1} \gamma_h \leq \sum_{\ell=0}^{j-1} \sum_{i=r \vee \ell}^{j-1} \tilde \gamma_i \bar \gamma^{i-\ell+1} \qquad \text{for} \quad j > r.
\end{equation}
We prove now that, if $\bar\gamma \leq \frac{\sqrt{17}-1}8$, then
\begin{equation} \label{eq_boundtgamma}
\tilde\gamma_j \leq K' (2{\bar\gamma})^{r-j}
\end{equation}
with $K' > 2C/\frk n$. By \eqref{eq_deftgammar}, $\tilde\gamma_r \leq \bar\gamma/(\gamma\frk n) = (C/\frk n)(1+\bar\gamma) \leq K'$ and then, by induction, \eqref{eq_deftgammar+'} gives, for $j > r$,
\[ \begin{split}
\tilde \gamma_j &\leq K' \bar\gamma (2{\bar\gamma})^r \biggl( \sum_{\ell=0}^{r} \bar\gamma^{-\ell} \sum_{i=r}^{j-1} 2^{-i} + \sum_{\ell=r+1}^{j-1} \bar\gamma^{-\ell} \sum_{i=\ell}^{j-1} 2^{-i} \biggr) 
\\
&\leq  K' \bar\gamma (2{\bar\gamma})^r \frac{2}{1-\bar\gamma} \bigl( (2\bar\gamma)^{-r} + (2\bar\gamma)^{1-j} \bigr) \leq \frac{4\bar\gamma^2}{1-\bar\gamma} K'  (2{\bar\gamma})^{r-j},
\end{split}
\]
which implies \eqref{eq_boundtgamma} when $\bar\gamma \leq \frac{\sqrt{17}-1}8$. We conclude the proof by noting that the bounds \eqref{eq_boundgammaj} and \eqref{eq_boundtgamma} give
\[
\tilde\gamma^{(r)} = \sum_{j = r}^{h^*} \gamma^{(j)}\tilde\gamma_j \leq K' \bar\gamma^r \sum_{j \geq r} 2^{r-j} = 2K'\bar\gamma^r. \qedhere
\]
Clearly, for $\bar\gamma \in (\frac{\sqrt{17}-1}8,1)$ it suffices to consider $\theta^* = \theta^*(\frac{\sqrt{17}-1}8,\frk n)$ to obtain a shaper bound than \eqref{eq_lemrecvarbound}.
\end{proof} 

%\begin{remark}
%Note that following the above computations, given $\epsilon > 0$ one has more in general that $\tilde\gamma_j \leq K'((1+\epsilon)\bar\gamma)^{r-j}$ provided that $2(1+\epsilon)^2 \bar\gamma^2 \leq (1-\bar\gamma)\epsilon$. Explicit computations show that the resulting threshold for $\bar\gamma$ is optimised exactly for $\epsilon = 1$, which thus allows for a larger $\theta^*$.
%\end{remark}

\begin{remark} \label{rmk_extreclemvar}
In the proof we used that $h^*$ is finite; nevertheless, \Cref{lem_recsumvar} extends to $h^* = +\infty$ as well, by continuing to iterate the inequality~\eqref{recineqansatzvar}, provided that the term $\gamma^{(h)} \sum_{i \sim_h 0} C^i$ vanishes as $h \to \infty$; this happens, for instance, if the $C^i$'s are uniformly bounded and $\gamma^{(h)} \#\call N_0^{(h)}$ vanishes as $h \to \infty$, which is true if $\theta^*$ is small enough and the graph has uniformly bounded in-degrees. On the other hand, it is clear that \Cref{lem_recsum} holds as it is for graphs with countably many vertices as well.
\end{remark}

\end{document}